\documentclass[12pt, a4paper, a4wide, reqno]{amsart}

\usepackage[utf8]{inputenc}

\usepackage{amssymb,latexsym}
\usepackage{amsfonts}
\usepackage{amsmath}

\usepackage{enumerate}
\usepackage{float}
\usepackage{mathtools}
\usepackage{enumerate}
\usepackage{booktabs}

\usepackage[usenames, dvipsnames]{xcolor}
\usepackage[colorlinks,linkcolor=blue,anchorcolor=blue,citecolor=blue]{hyperref}

\usepackage{graphicx}

\usepackage{caption}
\usepackage{subcaption}

\makeatletter
\@namedef{subjclassname@2020}{%
  \textup{2020} Mathematics Subject Classification}
\makeatother







\newcommand\al{\alpha}


\def\ind{1\hspace{-2,9pt}\textnormal{I}}

\sloppy

\newtheorem{theorem}{Theorem}

\newtheorem{corollary}[theorem]{Corollary}
\newtheorem{lemma}[theorem]{Lemma}

\newtheorem{example}[theorem]{Example}

\theoremstyle{definition}
\newtheorem{remark}[theorem]{Remark}

\title{Multi seasonal discrete time risk model revisited}

\thanks{The third author is supported by grant No.\ S-MIP-20-16 from the Research Council of Lithuania}

\author{Andrius Grigutis}
\email{andrius.grigutis@mif.vu.lt}

\author{Jonas Jankauskas}
\email{jonas.jankauskas@gmail.com}

\author{Jonas \v{S}iaulys}
\email{jonas.siaulys@mif.vu.lt}

\address{Institute of Mathematics, Vilnius University, Naugarduko 24, Vilnius LT-03225, Lithuania}

\keywords{discrete time risk model, random walk, survival probability, generating function, branching process}

\subjclass[2020]{60G50,60J80,91G05}

\begin{document}

\maketitle

\begin{abstract}
In this work we set up the distribution function of $\mathcal{M}:=\sup_{n\geqslant1}\sum_{i=1}^{n}{(Z_i-1)}$, where the random walk $\sum_{i=1}^{n}Z_i, n\in\mathbb{N},$ is generated by $N$ periodically occurring distributions and the integer-valued and non-negative random variables $Z_1,\,Z_2,\,\ldots$ are independent. The considered random walk generates so-called multi seasonal discrete time risk model, and a known distribution of random variable $\mathcal{M}$ enables to calculate ultimate time ruin or survival probability. Verifying obtained theoretical statements we demonstrate several computational examples for survival probability $\mathbb{P}(\mathcal{M}< u)$ when $N=2,\,3$ or $10$.
\end{abstract}

\section{Introduction}\label{in}
The sequence of sums of independent random variables $\sum_{i=1}^{n}Z_i$,$\, n\in\mathbb{N}$, is usually called the {\it random walk}. Originally, random walk was introduced in \cite{Pearson}. There are dozens of versions of random walks appearing in many research areas and containing various interesting properties, which attract scientific attention, see for instance \cite{Erdos}, \cite{TBK} and references therein. In insurance, random walk appears arguing that the insurers surplus $W_{u,\kappa}$ for time moments $n\in\mathbb{N}_0:=\mathbb{N}\cup\{0\}$ might be expressed by the equation
$$
W_{u,\kappa}(n):=u+\kappa n-\sum_{i=1}^{n}Z_i,\,W_{u,\kappa}(0):=u,
$$
 where $u\in\mathbb{N}_0$ is interpreted as initial surplus, $\kappa\in\mathbb{N}$ denotes an income rate per unit of time (premium), and random variables $Z_i$ represent independent  random expenses. Therefore, playing a game of gain and loose, we are interested in a likelihood that $W_{u,\kappa}(n)>0$ for all $n\in \mathbb{N}$.

 Such type of models as the given one $W_{u,\kappa}(n)$ are called the {\it discrete time risk models} and there are many versions of them originating from the more general E. Sparre  Andersen's collective risk model introduced in \cite{Andersen}. Classical and recent works on the subject are \cite{dv-1997,dv-1999, dg-1988, di-1999, dw-1991, Gerber, Gerber1, lp-2006, ll-2008, pl-1997, Shiu, yz-2006}.
 
If $\kappa=1$ and random generators $Z_1,\,Z_2,\,\ldots$ are integer valued, independent and identically distributed, then the insurer's surplus $W_{u,1}$ is called the homogeneous discrete time risk model. Such model admits the simplest representation. Its finite and ultimate time ruin probabilities are:
\begin{equation}
 \begin{aligned}
\psi(u,t)&:=\mathbb{P}\left(\bigcup_{n=1}^{t}\Big\{W_{u,1}(n)\leqslant 0\Big\}\right),\ t\in\mathbb{N},\\
\psi(u)&:=\mathbb{P}\left(\bigcup_{n=1}^{\infty}\Big\{W_{u,1}(n)\leqslant 0\Big\}\right).
\end{aligned}
\end{equation}
Then, the values of the finite time ruin probability of the homogeneous discrete time risk model can be expressed by 
\begin{equation}\label{fi}
\begin{aligned}
\psi(u,1)&=\mathbb{P}\left(Z_1 > u\right),\,  \psi(u,t)=\\ &=\psi(u,1)+\sum_{k=1}^{u}\psi(u+1-k,t-1)\mathbb{P}\left(Z_1=k\right),\\
\end{aligned}
\end{equation}
for $u\in \mathbb{N}_0,\,t\in\mathbb{N}$, see for example \cite{d-2005,dg-1988,dw-1991,s-1989}, while the ultimate time ruin probability, under assumption $\mathbb{E}Z_1<1$, satisfies
\begin{equation}\label{fii}
 \begin{aligned}
 \psi(0)&=\mathbb{E}Z_1,\\ \psi(u)&=\sum_{j=1}^{u-1}\mathbb{P}\left(Z_1>j\right)\psi(u-j)+\sum_{j=u}^{\infty}\mathbb{P}\left(Z_1>j\right)), \quad u\in\mathbb{N},
 \end{aligned}
\end{equation}
 see \cite{d-2005}, also \cite{dw-1991, Shiu, s-1989} and references therein.
 
 If $\kappa>1$ or random generators $Z_1,\,Z_2,\,\ldots$ are not identically distributed, then the behaviour of process $W_{u,\kappa}$ becomes much more complicated. However, for the finite time ruin probability a recursive formula, similar to \eqref{fi}, holds when $\kappa=1$ and $Z_1,\,Z_2,\,\ldots$ is an arbitrary sequence of independent, non-negative and integer valued random variables, see \cite{BBS}. For the ultimate time ruin probability it is also possible to derive a recursive relation of type \eqref{fii}, however there arise difficulties in determining the values of function $\psi(u)$ for $u\in\{0,\,1,\,\ldots,\,d\}$, where $d$ is model dependent. In papers \cite{DS, GKS, GS, GS1} several multi seasonal discrete time risk models were examined. It was demonstrated there that initial values of the function $\psi$ can be found from limits of some special recurrences. In this paper, we consider a general $N$-seasonal discrete time risk model. We propose a new, generating functions based, method to find the initial values of the survival function $\varphi:=1-\psi$.

\section{Model description and preliminaries}\label{md}

In this section, we describe $N$-seasonal discrete time risk model in detail. We say that insurer's wealth $W_u(n)$ varies according to \textit{$N$-seasonal $(N\in\mathbb{N})$ discrete time risk model} (denoted by \textit{DTRM(N)}) if, for every moment of time $n\in\mathbb{N}_0$,
\begin{align}\label{pop}
W_u(n):=u+n-\sum_{j=1}^nZ_j,\,W_u(0):=u.
\end{align}
Here $u\in \mathbb{N}_0$ denotes initial insurer's wealth and $Z_1,\,Z_2,\,\ldots$ are independent, non-negative and integer-valued random variables such that $Z_j\mathop{=}\limits^{d}Z_{N+j}$ for each $j\in\mathbb{N}$. According to the definition, each \textit{DTRM(N)} is generated by initial wealth $u\in\mathbb{N}$ and collection of independent, non-negative and integer-valued random variables $Z_1,\,Z_2,\,\ldots,\, Z_N$.

    As it was mentioned in Section \ref{in}, we are interested to calculate \textit{the survival probability}
\begin{equation}\label{1}
\varphi(u):=\mathbb{P}\left(\bigcap_{n=1}^{\infty}\left\{W_u(n)>0\right\}\right)
=\mathbb{P}\left(\sup_{n\geqslant 1}\sum_{j=1}^n\left(Z_j-1\right)<u\right).
\end{equation}
Mathematically, the survival probabilities admit simpler representations compared to the ruin probabilities. 

In turn, the values of $\varphi(u)$ are dependent on the condition
\begin{equation}\label{2}
\mathbb{E}S_N<N,
\end{equation}
where $S_N:=\sum_{j=1}^NZ_j$, see Theorems \ref{t1} and \ref{t2} below. In insurance, the condition \eqref{2} is called the \textit{net profit condition} and it means that occurring claim amounts on average are smaller than collected premiums. 

    In view of \eqref{1}, we derive that
\begin{scriptsize}%
\begin{align}\label{2+}
\varphi(u)&=\mathbb{P}\left(\bigcap_{n=1}^{N}\Big\{W_u(n)>0\Big\}\cap
\bigcap_{n=N+1}^{\infty}\Big\{W_u(n)>0\Big\}\right)\nonumber\\
&=\mathbb{P}\left(\bigcap_{n=1}^{N}\bigg\{\sum_{j=1}^nZ_j\leqslant u+n-1\Bigg\}\cap
\bigcap_{n=N+1}^{\infty}\bigg\{u+N-\sum_{j=1}^NZ_j+n-N-\sum_{j=N+1}^nZ_j>0\Bigg\}\right)\nonumber\\
&=\sum_{\substack{i_1\leqslant u\\i_1+i_2\leqslant u+1\\i_1+i_2+i_3\leqslant u+2\\ \ldots \\
i_1+i_2+\ldots+i_N\leqslant u+N-1}}\hspace{-5mm}\mathbb{P}(Z_1=i_1)\mathbb{P}(Z_2=i_2)\cdots\mathbb{P}(Z_N=i_N)\,
\varphi\left(u+N-\sum_{j=1}^Ni_j\right).
\end{align}
\end{scriptsize}%
In order to find the values $\varphi(N),\,\varphi(N+1),\,\ldots$ from recurrence (\ref{2+}) we must know $\varphi(0),\,\varphi(1),\,\ldots,\, \varphi(N-1)$. Therefore, our focus is on finding these initial values of $\varphi$.
Let us note that the number of initial values required for the recurrence relation \eqref{2+} depends on the smallest values of random variables $Z_1,\,Z_2,\,\ldots,\,Z_N$. One may observe that the order or recurrence in \eqref{2+} reduces if some random variables $Z_1,\,Z_2,\,\ldots,\,Z_N$ do not attain ''small'' values, i.e. $\mathbb{P}(Z_n> k)=1$ for some $n=1,\,2,\,\ldots,\,N$ and $k\geqslant0$. For instance, if $Z_N\geqslant N-1$, then it follows from \eqref{2+} that
$$
\varphi(0)=\mathbb{P}\left(\sum_{j=1}^NZ_j=N-1\right)\,\varphi(1)
$$
and there is only $\varphi(0)$ we need to know to use the formula \eqref{2+}. More over, we observe that there are $\binom{2N-1}{N}$ integer and non-negative solutions $(i_1,\,i_2,\,\ldots,\,i_N)$ satisfying the inequality $i_1+i_2+\ldots+i_N<N$. In our context, this means that there are $\binom{2N-1}{N}$ options where the values of random variables $Z_1,\,Z_2,\,\ldots,\,Z_N$ can start and not to violate the net profit condition. 
Such type of combinatorial questions appear in many probabilistic and combinatorial books, see for example \cite{Ross}.

Before formulating the main result, we introduce several notations. We denote the following random variables which are related to shifted random walks appearing in formula \eqref{1}:
\begin{equation}\label{3}
\begin{aligned}
\mathcal{M} :=\mathcal{M}_1 :=\sup_{k\geqslant 1}&\left(\sum_{j=1}^k(Z_j-1)\right)^+,\,
\mathcal{M}_2:=\sup_{k\geqslant 2}\left(\sum_{j=2}^k(Z_j-1)\right)^+,\, \ldots,\,\\
&\mathcal{M}_N:=\sup_{k\geqslant N}\left(\sum_{j=N}^k(Z_j-1)\right)^+,
\end{aligned}
\end{equation}

where $a^+=\max\{a,0\}$ denotes the positive part of $a\in\mathbb{R}$. Let us observe that in general, the random variables $\mathcal{M}_1,\,\mathcal{M}_2,\,\ldots,\,\mathcal{M}_N$ are extended, i.e. they may attain the infinity with positive probability. However, if the net profit condition is satisfied, then these random variables are not extended, i.e.
$$
\lim_{u\rightarrow\infty}\mathbb{P}\left(\mathcal{M}_k< u\right)=1
$$
for every $k\in\{1,\,2,\,\ldots,\,N\}$. See Lemma \ref{lema1} in Section \ref{al} for the proof.\\
We now denote the local probabilities
\begin{equation}\label{0+}
m_j^{(k)}:=\mathbb{P}\left(\mathcal{M}_k=j\right),\ \  z_j^{(k)}:=\mathbb{P}\left(Z_k=j\right),\ \ s_j^{(N)}:=\mathbb{P}\left(S_N=j\right),
\end{equation}
where $k\in\{1,\,2,\,\ldots,\,N\}$ and $j\in\mathbb{N}_0$.
The defined probabilities and formula \eqref{1} imply 
\begin{equation}\label{cai}
\varphi(n+1)=\mathbb{P}(\mathcal{M}\leqslant n)=\sum_{j=0}^nm_j^{(1)}
\end{equation}
for all $n\in\mathbb{N}_0$.

\section{Main results}\label{mr}

In this section, we formulate two key theorems which are used to set up an algorithms for the survival probability $\varphi$ calculation of the \textit{DTRM(N)}. Before formulating these main results, we recall that the generating function of an integer-valued and non-negative random variable $Z$ is the complex function
$$
G_Z(s):=\sum_{k=0}^\infty\mathbb{P}(Z=k)s^k,
$$
where $s\in\mathbb{C}$ and $|s|\leqslant 1$. It is easy to check that if $Z_1$ and $Z_2$ are two independent random variables, then
$$
G_{Z_1+Z_2}(s)=G_{Z_1}(s)G_{Z_2}(s).
$$
Hence, for the \textit{DTRM(N)} we have that
$$
G_{S_N}(s)=\prod_{k=1}^NG_{Z_k}(s).
$$
We use notations $G_Z(s)$ and $G(s)$ interchangeably meaning the generating function of some non-negative and integer valued r.v.
\begin{theorem}\label{t1}
Suppose that the $N$-seasonal discrete time risk model
is generated by random variables $Z_1,\,Z_2,\,\ldots,\, Z_N$, with distributions described by probabilities \eqref{0+}. If the net profit condition \eqref{2} is satisfied, then the following five statements hold:\\
\indent{\rm (i)} The probability generating functions of $Z_1,\,Z_2,\,\ldots,\,Z_N$ and $\mathcal{M}_N$, for $0<|s|\leqslant1$, are related by
\begin{multline}\label{five}
(s-1)\left(m_0^{(1)}z_0^{(N)}+\sum_{k=1}^{N-1}m_0^{(k+1)}z_0^{(k)}G_{Z_N}(s)
\,s^{-k}\prod_{j=1}^{k-1}G_{Z_j}(s)\right)=\\=s\,G_{\mathcal{M}_N}(s)\left(1-\frac{G_{S_N}(s)}{s^N}\right).
\end{multline}
\indent{\rm (ii)}
The probabilities $m_0^{(1)},\,m_0^{(2)},\,\ldots,\,m_0^{(N)}$ and $z_0^{(1)},\,z_0^{(2)},\,\ldots,\,z_0^{(N)}$ satisfy the equation
\begin{equation}\label{aiai+++}
m_0^{(1)}z_0^{(N)}+\sum_{k=1}^{N-1}m_0^{(k+1)}z_0^{(k)}=N-\mathbb{E}S_N.
\end{equation}
\indent{\rm (iii)} If $\alpha, 0<|\alpha|< 1,$ is a root of $G_{S_N}(s)=s^N$, then
\begin{equation}\label{aiai+}
m_0^{(1)}z_0^{(N)}+\sum_{k=1}^{N-1}m_0^{(k+1)}z_0^{(k)}G_{Z_N}(\alpha)\,\alpha^{-k}\prod_{j=1}^{k-1}G_{Z_j}(\alpha)=0.
\end{equation}
\indent{\rm (iv)} If $\alpha, 0<|\alpha|< 1$, is a root of $G_{S_N}(s)=s^N$ with multiplicity $\varkappa, \varkappa\in\{2,\,3,\,\ldots,\,N-1\},$ then 
\begin{equation}\label{aiai++}
\begin{aligned}
&\sum_{k=1}^{N-1}m_0^{(k+1)}z_0^{(k)}\frac{d^l}{d s^l}\left(G_{Z_N}(s)\,s^{-k}
\prod_{j=1}^{k-1}G_{Z_j}(s)\right)\Bigg{|}_{s=\alpha}=0,\\ &l\in\{1,\,2,\,\ldots,\,\varkappa-1\},
\end{aligned}
\end{equation}
where $d^l/ds^l(\cdot)|_{s=\alpha}$ denotes the $l$'th derivative at $s=\alpha$.

\indent{\rm (v)} The probabilities $m_n^{(1)},\,m_n^{(2)},\,\ldots,\,m_n^{(N)}$ and $z_n^{(1)},\,z_n^{(2)},\,\ldots,\,z_n^{(N)}$ for $n\in\mathbb{N}$ satisfy the system of equations
\begin{align}\label{aiai++++}
\begin{cases}
z_0^{(N)}m_n^{(1)}&=\,m_{n-1}^{(N)}-\sum\limits_{j=0}^{n-1}z_{n-j}^{(N)}m_j^{(1)}-m_0^{(1)}z_0^{(N)}\ind_{\{n=1\}}\\
z_0^{(1)}m_n^{(2)}&=\,m_{n-1}^{(1)}-\sum\limits_{j=0}^{n-1}z_{n-j}^{(1)}m_j^{(2)}-m_0^{(2)}z_0^{(1)}\ind_{\{n=1\}}\\
z_0^{(2)}m_n^{(3)}&=\,m_{n-1}^{(2)}-\sum\limits_{j=0}^{n-1}z_{n-j}^{(2)}m_j^{(3)}-m_0^{(3)}z_0^{(2)}\ind_{\{n=1\}}\\
&\,\,\vdots\\
z_0^{(N-1)}m_n^{(N)}&=\,m_{n-1}^{(N-1)}-\sum\limits_{j=0}^{n-1}z_{n-j}^{(N-1)}m_j^{(N)}-m_0^{(N)}z_0^{(N-1)}\ind_{\{n=1\}}
\end{cases}.
\end{align}
\end{theorem}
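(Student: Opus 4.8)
I would prove Theorem~\ref{t1} from a single one-step distributional recursion for the suprema in \eqref{3}. First, I would establish that, for every $k\in\{1,\ldots,N\}$,
\[
\mathcal{M}_k\dist\big((Z_k-1)+\mathcal{M}_{k+1}\big)^{+},
\]
with the understanding that $\mathcal{M}_{N+1}$ denotes an independent copy of $\mathcal{M}_1$. For $k<N$ this is immediate: peeling off the first summand $Z_k-1$ in $\sup_{n\geq k}\big(\sum_{j=k}^{n}(Z_j-1)\big)^{+}$ and using the elementary identity $\max\{0,a,a+t\}=\max\{0,\,a+\max\{0,t\}\}$ together with the independence of $Z_k$ and $(Z_{k+1},Z_{k+2},\ldots)$ gives the claim. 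For $k=N$ the same peeling yields $\mathcal{M}_N=\big((Z_N-1)+\widetilde{\mathcal{M}}\big)^{+}$ with $\widetilde{\mathcal{M}}:=\sup_{n\geq N+1}\big(\sum_{j=N+1}^{n}(Z_j-1)\big)^{+}$; since $\widetilde{\mathcal{M}}$ is a function of the block $Z_{N+1},Z_{N+2},\ldots$ it is independent of $Z_1,\ldots,Z_N$, and by the periodicity $Z_j\dist Z_{N+j}$ it has the law of $\mathcal{M}_1$. (That all these suprema are a.s.\ finite under \eqref{2}, hence that the generating functions below converge for $|s|\leq1$, is exactly Lemma~\ref{lema1}.)

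Next, I would turn this into a generating-function identity. Put $Y:=(Z_k-1)+\mathcal{M}_{k+1}$; it is integer-valued with $Y\geq-1$, and $\mathcal{M}_k=Y^{+}$ has the law of $Y$ with its atom at $-1$ moved to $0$. Since $\mathbb{P}(Y=-1)=z_0^{(k)}m_0^{(k+1)}$ and, by independence, $\mathbb{E}\,s^{Y}=s^{-1}G_{Z_k}(s)\,G_{\mathcal{M}_{k+1}}(s)$, collecting terms yields, for $0<|s|\leq1$ and $k=1,\ldots,N$,
\begin{equation*}
s\,G_{\mathcal{M}_k}(s)=(s-1)\,z_0^{(k)}m_0^{(k+1)}+G_{Z_k}(s)\,G_{\mathcal{M}_{k+1}}(s),\tag{$\star$}
\end{equation*}
with the cyclic conventions $G_{\mathcal{M}_{N+1}}:=G_{\mathcal{M}_1}$ and $m_0^{(N+1)}:=m_0^{(1)}$. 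The delicate point is the accounting around the negative power $s^{-1}$ in $\mathbb{E}\,s^{Y}$; once that is done, the rest is routine.

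Then, parts (i)--(iv) follow from ($\star$). Telescoping ($\star$) for $k=1,\ldots,N-1$ expresses $G_{\mathcal{M}_1}$ through $G_{\mathcal{M}_N}$; substituting this into ($\star$) with $k=N$, using $\prod_{j=1}^{N}G_{Z_j}(s)=G_{S_N}(s)$, and solving for $G_{\mathcal{M}_N}(s)$ gives \eqref{five} after collecting the $s^{-k}$ factors. For (ii)--(iv) I would write \eqref{five} as $(s-1)F(s)=s\,G_{\mathcal{M}_N}(s)\big(1-G_{S_N}(s)/s^{N}\big)$, with $F$ the bracket on its left-hand side. Dividing by $s-1$ and letting $s\to1$, using $G_{\mathcal{M}_N}(1)=G_{Z_j}(1)=1$ and $\lim_{s\to1}\big(1-G_{S_N}(s)/s^{N}\big)/(s-1)=N-\mathbb{E}S_N$, gives $F(1)=N-\mathbb{E}S_N$, which is \eqref{aiai+++}. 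At a root $\alpha$, $0<|\alpha|<1$, of $G_{S_N}(s)=s^{N}$ the right-hand side vanishes, and $\alpha\neq1$ forces $F(\alpha)=0$, i.e.\ \eqref{aiai+}; if $\alpha$ has multiplicity $\varkappa$, the factor $1-G_{S_N}(s)/s^{N}$, and hence the right-hand side, vanishes there to order at least $\varkappa$, so $F$ does too, and differentiating $l=1,\ldots,\varkappa-1$ times kills the constant $z_0^{(N)}m_0^{(1)}$ and leaves \eqref{aiai++}. (The bound $\varkappa\leq N-1$ merely reflects that $s^{N}-G_{S_N}(s)$ has only $N-1$ zeros in the open unit disc and plays no role in the argument.)

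Finally, for part (v) I would compare the coefficients of $s^{n}$, $n\geq1$, on the two sides of ($\star$) for each $k=1,\ldots,N$ (with $k=N$ wrapping to $\mathcal{M}_1$): the left-hand side contributes $m_{n-1}^{(k)}$, the product $G_{Z_k}(s)G_{\mathcal{M}_{k+1}}(s)$ contributes the convolution $\sum_{j=0}^{n}z_{n-j}^{(k)}m_j^{(k+1)}$, and $(s-1)z_0^{(k)}m_0^{(k+1)}$ contributes $z_0^{(k)}m_0^{(k+1)}\ind_{\{n=1\}}$; isolating the $j=n$ summand $z_0^{(k)}m_n^{(k+1)}$ yields the $k$-th line of \eqref{aiai++++}. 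I expect the only real obstacle to be the step carried out second — getting the $s^{-1}$/positive-part accounting exactly right and checking that every series manipulation is legitimate on $0<|s|\leq1$ via Lemma~\ref{lema1}; once ($\star$) is available, (i)--(v) are bookkeeping together with a little elementary complex analysis.
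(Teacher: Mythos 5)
Your proposal is correct, and for parts (i), (iii), (iv) and (v) it is essentially the paper's own argument: your one-step identity $\mathcal{M}_k\stackrel{d}{=}\big((Z_k-1)+\mathcal{M}_{k+1}\big)^+$ is Lemma~\ref{lema2} (re-indexed cyclically), your identity $(\star)$ is exactly the system \eqref{OR} of Lemma~\ref{lema3}, and the telescoping to get \eqref{five}, the evaluation at roots $\alpha$ with $0<|\alpha|<1$, and the coefficient comparison for \eqref{aiai++++} all coincide with what is done in Section~\ref{pt1}.

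The one place where you genuinely diverge is part (ii), and your route is simpler than the paper's. The paper differentiates each equation of \eqref{OR}, sums, and lets $s\to1^-$; this introduces the derivatives $G^{\prime}_{\mathcal{M}_k}(s)$, which need not stay bounded as $s\to1^-$, so the paper has to split into the cases $\mathbb{E}\mathcal{M}_1<\infty$ and $\mathbb{E}\mathcal{M}_1=\infty$ and invoke two auxiliary results: Lemma~\ref{lema5} (to propagate finiteness or infiniteness of the means around the cycle) and Lemma~\ref{lema4} together with L'Hospital's rule (to show $\big(s-G_{Z_N}(s)\big)G^{\prime}_{\mathcal{M}_1}(s)\to0$ in the infinite-mean case). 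You instead divide the already-assembled identity \eqref{five} by $s-1$ and let $s\to1^-$: the left side tends to $F(1)=m_0^{(1)}z_0^{(N)}+\sum_{k=1}^{N-1}m_0^{(k+1)}z_0^{(k)}$ by continuity at $s=1$ of the finitely many generating functions involved, while on the right $G_{\mathcal{M}_N}(s)\to1$ by Lemma~\ref{lema1} and $\big(s^N-G_{S_N}(s)\big)/(s-1)\to N-\mathbb{E}S_N$ because $\mathbb{E}S_N<N$ is finite (the two difference quotients $(s^N-1)/(s-1)$ and $(G_{S_N}(s)-1)/(s-1)$ converge to $N$ and $\mathbb{E}S_N$ respectively). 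No derivative of any $G_{\mathcal{M}_k}$ ever appears, so Lemmas~\ref{lema4} and~\ref{lema5} become unnecessary for Theorem~\ref{t1}. This is a clean improvement; everything else is the same proof, and your flagged ``delicate point'' (the $s^{-1}$ accounting for the atom at $-1$) is handled in the paper exactly as you describe.
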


\begin{theorem}\label{t2}
Suppose that the $N$-seasonal discrete time risk model is generated by random variables $Z_1,\,Z_2,\,\ldots,\, Z_N$ and the net profit condition \eqref{2} is unsatisfied:\\
\indent {\rm (i)} If  $\mathbb{E} S_N > N$, then $\varphi(u)=0$ for all $u\in\mathbb{N}_0$.\\
\indent {\rm (ii)} If $\mathbb{E} S_N = N$ and  $\mathbb{P}(S_N=N)<1$, then $\varphi(u)=0$ for all $u\in\mathbb{N}_0$.\\
\indent {\rm (iii)} If $\mathbb{P}(S_N=N)=1$, then random variables $Z_1,\,Z_2,\,\ldots,\,Z_N$ are degenerate and
$u+t^*-\sum_{k=1}^{t^*}Z_k\leqslant 0$ implies $\varphi(u)=0$,
while $u+t^*-\sum_{k=1}^{t^*}Z_k\geqslant 1$ implies $\varphi(u)=1$, where $t=t^*\in\{1,2,\ldots,N\}$ minimizes the difference
$t-\sum_{k=1}^{t}Z_k$.
\end{theorem}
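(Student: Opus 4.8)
The plan is to treat the three cases separately: parts (i) and (ii) reduce to a statement about an embedded i.i.d.\ random walk, while part (iii) reduces to a deterministic, $N$-periodic trajectory. For (i) and (ii) I would first pass to the walk observed at the ends of successive seasons. Set $Y_\ell:=\sum_{j=(\ell-1)N+1}^{\ell N}(Z_j-1)$ and $T_m:=\sum_{\ell=1}^{m}Y_\ell=\sum_{j=1}^{mN}(Z_j-1)$, with $T_0:=0$. By the independence of the $Z_j$ together with the periodicity $Z_j\dist Z_{N+j}$, the increments $Y_1,Y_2,\ldots$ are i.i.d.\ with $Y_1\dist S_N-N$; in particular $Y_1\geq -N$. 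Restricting the supremum defining $\mathcal M$ in \eqref{3} to the indices $k=mN$, $m\geq1$, gives $\mathcal M\geq\bigl(\sup_{m\geq1}T_m\bigr)^{+}$, so for (i) and (ii) it is enough to show that $\sup_{m\geq1}T_m=+\infty$ almost surely: this forces $\mathcal M=+\infty$ a.s., and hence $\varphi(u)=\mathbb{P}(\mathcal M<u)=0$ for every $u\in\mathbb{N}_0$.

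For (i), since $Y_1^{-}\leq N$ the mean $\mathbb{E}Y_1=\mathbb{E}S_N-N$ is well defined and lies in $(0,+\infty]$, so by the strong law of large numbers $T_m\to+\infty$ a.s., whence $\sup_{m\geq1}T_m=+\infty$ a.s. For (ii) we instead have $\mathbb{E}S_N=N$ finite, so $\mathbb{E}Y_1=0$ and $\mathbb{E}|Y_1|<\infty$, while $Y_1$ is non-degenerate because $\mathbb{P}(Y_1=0)=\mathbb{P}(S_N=N)<1$. Here I would invoke the classical fact (Chung--Fuchs) that a non-degenerate mean-zero random walk on the lattice oscillates, i.e.\ $\limsup_{m\to\infty}T_m=+\infty$ a.s.; a self-contained version notes that such a walk is recurrent and that, since $\mathbb{P}(Y_1>0)>0$, after each of its infinitely many returns near the origin there is a fixed positive probability of a further upward excursion, so every level is exceeded infinitely often. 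Either way $\sup_{m\geq1}T_m=+\infty$ a.s. I expect this lattice-recurrence input to be the only genuinely non-elementary ingredient of the theorem.

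For (iii), assume $\mathbb{P}(S_N=N)=1$. I would first show that independent, non-negative, integer-valued random variables whose sum is almost surely constant must each be degenerate: writing $a_j$ and $A_j$ for the least and greatest points of the support of $Z_j$ (both finite since $Z_j\leq S_N=N$ a.s.), independence puts $\sum_{j=1}^{N}a_j$ and $\sum_{j=1}^{N}A_j$ into the support $\{N\}$ of $S_N$, so $\sum_j a_j=\sum_j A_j=N$ and therefore $a_j=A_j$ for every $j$. Hence $Z_j\equiv c_j\in\mathbb{N}_0$ with $\sum_{j=1}^{N}c_j=N$, and using $Z_{N+j}=Z_j$ one gets $W_u(n+N)=W_u(n)$ for all $n$, so the trajectory is $N$-periodic. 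Consequently $\bigcap_{n\geq1}\{W_u(n)>0\}=\bigcap_{n=1}^{N}\{W_u(n)>0\}$, and this is a deterministic event: with $t^*\in\{1,\ldots,N\}$ minimizing $t-\sum_{k=1}^{t}Z_k$ (equivalently minimizing $W_u(t)$), it equals the whole sample space when $u+t^*-\sum_{k=1}^{t^*}Z_k\geq1$ and the empty set when $u+t^*-\sum_{k=1}^{t^*}Z_k\leq0$, giving $\varphi(u)=1$ respectively $\varphi(u)=0$, as claimed. The one step needing care here is the periodicity reduction, which rests on the fact that every length-$N$ window of the (degenerate, $N$-periodic) increments sums to exactly $N$.
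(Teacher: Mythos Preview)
Your proof is correct and takes a genuinely different route from the paper's.

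For parts (i) and (ii) the paper never passes to the embedded i.i.d.\ walk $(T_m)$. Instead it starts from the basic recursion \eqref{2+}, sums the identity over $u=0,\ldots,v$, and after some rearrangement arrives at the key equality \eqref{ga}. Letting $v\to\infty$ (with a reference to \cite{GS,GS1} for the limit on the left) yields $\varphi(\infty)\,(N-\mathbb{E}S_N)\geq 0$, which forces $\varphi(\infty)=0$ when $\mathbb{E}S_N>N$; in the borderline case $\mathbb{E}S_N=N$ the limit produces the identity \eqref{gaga}, from which $\varphi(0)=\cdots=\varphi(N-1)=0$ is read off case by case using that some $s_k^{(N)}>0$, and then \eqref{2+} propagates the vanishing to all $u$. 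Your argument bypasses this summation machinery entirely: the passage to the end-of-season increments $Y_\ell\stackrel{d}{=}S_N-N$ reduces everything to the classical trichotomy for one-dimensional i.i.d.\ random walks, with the SLLN handling (i) and Chung--Fuchs oscillation handling (ii). The trade-off is that the paper's approach is tied to the recursion it develops anyway (so it is in some sense ``free'' once \eqref{2+} and \eqref{ga} are in place), while yours is shorter and more conceptual but imports the non-elementary recurrence/oscillation fact for mean-zero walks. For part (iii) your treatment is essentially the same as the paper's, but more explicit: the paper simply asserts degeneracy and periodicity, whereas you supply the min/max-of-support argument and the reduction $\bigcap_{n\geq1}\{W_u(n)>0\}=\bigcap_{n=1}^{N}\{W_u(n)>0\}$ in full.
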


It is worth mentioning that connection between characteristics of collective risk models and roots of a certain generating functions is not surprising. The simple example could be Lundberg inequality (see \cite{AR} and \cite{Lundberg}), which is based on a root of a certain exponential moment. One may point to \cite{Rin_San} too, where the homogeneous model $DTRM(1)$ is studied with particular claims distribution. See also the source \cite{Kendall}, which includes a nice historical overview on branching and Galton-Watson processes.

\section{Bi-seasonal discrete time risk  model}\label{bi}

Originally the ultimate time ruin probability $\psi(u)=1-\varphi(u)$ for the  bi-risk  discrete time risk model \textit{DTRM(2)} was studied in \cite{DS}. In this section, we present new results for this model that simplifies the survival probability calculation when $N=2$. More precisely, the following assertion holds to get the initial values $\varphi(0)$ and $\varphi(1)$ needed for the recurrence relation \eqref{2+}.
\begin{theorem}\label{t3}
Suppose that the bi-risk discrete time risk model (DTRM(2)) is generated by random variables $Z_1,\,Z_2$ and the net profit condition holds $\mathbb{E}S_2=\mathbb{E}\big(Z_1+Z_2\big)<2$. Then, the following three statements are correct.\\
\indent {\rm (i)} If $z_0^{(1)}=\mathbb{P}(Z_1=0)>0$ and $z_0^{(2)}=\mathbb{P}(Z_2=0)>0$, then:
\begin{align*}
\varphi(0)=\left(2-\mathbb{E}S_2\right)\frac{\alpha}{\alpha-G_{Z_2}(\alpha)},\ \
\varphi(1)=\frac{2-\mathbb{E}S_2}{z_0^{(2)}}\frac{G_{Z_2}(\alpha)}{G_{Z_2}(\alpha)-\alpha},
\end{align*}
where $\alpha\in(-1,0)$, is the unique root of $G_{S_2}(s)=s^2$ and $G_{Z_2}(s)$ is the generating function of r.v. $Z_2$.\\
\indent {\rm (ii)} If $z_0^{(1)}>0$ and $z_0^{(2)}=0$, then:
\begin{align*}
\varphi(0)=2-\mathbb{E}S_2,\ \
\varphi(1)=\frac{2-\mathbb{E}S_2}{z_0^{(1)}z_1^{(2)}}.
\end{align*}
\indent {\rm (iii)} If $z_0^{(1)}=0$ and $z_0^{(2)}>0$, then:
\begin{align*}
\varphi(0)=0,\ \
\varphi(1)=\frac{2-\mathbb{E}S_2}{z_0^{(2)}}.
\end{align*}
In addition, $z_0^{(1)}=z_0^{(2)}=0$ causes $\mathbb{E}S_2\geqslant2$
and survival is impossible, i.e. $\varphi(u)=0$ for all $u\in\mathbb{N}_0$, except few trivial cases when $\mathbb{P}(S_2=2)=1$.
\end{theorem}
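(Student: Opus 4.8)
The plan is to derive all three cases from Theorem~\ref{t1} specialised to $N=2$, together with two elementary identities. First, $\varphi(1)=\mathbb{P}(\mathcal{M}\leqslant0)=m_0^{(1)}$ by \eqref{cai} with $n=0$. Second, since the positive part of a supremum is the supremum of the positive parts, $\mathcal{M}=\mathcal{M}_1$ satisfies the a.s.\ identity $\mathcal{M}_1=((Z_1-1)+\mathcal{M}_2)^{+}$ with $Z_1$ independent of $\mathcal{M}_2$; unwinding the positive part gives $\sup_{n\geqslant1}\sum_{j=1}^{n}(Z_j-1)=(Z_1-1)+\mathcal{M}_2$, which is $\geqslant-1$ with equality exactly when $Z_1=0$ and $\mathcal{M}_2=0$, so $\varphi(0)=\mathbb{P}((Z_1-1)+\mathcal{M}_2<0)=\mathbb{P}(Z_1=0)\mathbb{P}(\mathcal{M}_2=0)=z_0^{(1)}m_0^{(2)}$. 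Analogously $\mathcal{M}_2=((Z_2-1)+\mathcal{M}_3)^{+}$ with $Z_2$ independent of $\mathcal{M}_3$ and $\mathcal{M}_3\dist\mathcal{M}_1$ (by the periodicity $Z_j\dist Z_{j+2}$); hence if $z_0^{(2)}=0$ then $Z_2-1\geqslant0$ forces $\mathcal{M}_2=(Z_2-1)+\mathcal{M}_3$ and $m_0^{(2)}=z_1^{(2)}m_0^{(1)}$, i.e.\ $\varphi(0)=z_0^{(1)}z_1^{(2)}\varphi(1)$.

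For statement (i) set $A:=m_0^{(1)}z_0^{(2)}$ and $B:=m_0^{(2)}z_0^{(1)}$, so that $\varphi(0)=B$ and $\varphi(1)=A/z_0^{(2)}$ (here $z_0^{(2)}>0$). Theorem~\ref{t1}(ii) with $N=2$ reads $A+B=2-\mathbb{E}S_2$, while Theorem~\ref{t1}(iii), applied to a root $\alpha$ of $G_{S_2}(s)=s^2$ with $0<|\alpha|<1$, reads $A+B\,G_{Z_2}(\alpha)/\alpha=0$. This $2\times2$ linear system for $(A,B)$ has determinant $(G_{Z_2}(\alpha)-\alpha)/\alpha$, which is nonzero: $G_{Z_2}(\alpha)=\alpha$ would turn both equations into $A+B=0$, contradicting $2-\mathbb{E}S_2>0$. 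Solving gives $B=(2-\mathbb{E}S_2)\alpha/(\alpha-G_{Z_2}(\alpha))$ and $A=(2-\mathbb{E}S_2)G_{Z_2}(\alpha)/(G_{Z_2}(\alpha)-\alpha)$, which are exactly the asserted formulas for $\varphi(0)$ and $\varphi(1)$.

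It remains, for (i), to handle $\alpha$. Write $h(s):=G_{S_2}(s)-s^2$, so $h(1)=0$, $h(0)=z_0^{(1)}z_0^{(2)}>0$, and $h(-1)=G_{S_2}(-1)-1\leqslant0$. An elementary convexity argument for the cumulant generating function $t\mapsto\log G_{S_2}(e^{t})$ (convex, with slope $\mathbb{E}S_2<2$ at $t=0$, hence $\geqslant(\mathbb{E}S_2)t>2t$ for $t<0$) gives $G_{S_2}(s)>s^2$ on $(0,1)$, so $h$ has no zero in $[0,1)$. The crux is then an argument-principle count showing $h$ has exactly two zeros in $\overline{\D}$ counted with multiplicity --- indenting the unit circle at the boundary zero $s=1$ and using $|G_{S_2}(s)|\leqslant1=|s^2|$ on $\pD$, with equality confined to finitely many roots of unity, none of which except $s=1$ is a zero of $h$. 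Given this, the second zero is unique, real (complex zeros of the real-coefficient polynomial $h$ come in conjugate pairs), not in $[0,1)$, hence in $[-1,0)$; it equals $-1$ only when $h(-1)=0$, i.e.\ $S_2$ is even-valued a.s.\ (a situation in which $\alpha=-1$ works and can be checked directly), and otherwise $h(-1)<0$ forces $\alpha\in(-1,0)$, consistent with the intermediate value theorem. I expect this zero count to be the main technical obstacle.

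Statements (ii), (iii) and the closing remark need no root. If $z_0^{(2)}=0$, Theorem~\ref{t1}(ii) collapses to $B=2-\mathbb{E}S_2$, i.e.\ $\varphi(0)=2-\mathbb{E}S_2$; the net profit condition forces $z_1^{(2)}>0$ (otherwise $Z_2\geqslant2$ a.s.\ and $\mathbb{E}S_2\geqslant2$), so $\varphi(0)=z_0^{(1)}z_1^{(2)}\varphi(1)$ gives $\varphi(1)=(2-\mathbb{E}S_2)/(z_0^{(1)}z_1^{(2)})$. Symmetrically, if $z_0^{(1)}=0$ then $\varphi(0)=z_0^{(1)}m_0^{(2)}=0$ and Theorem~\ref{t1}(ii) reduces to $A=2-\mathbb{E}S_2$, whence $\varphi(1)=m_0^{(1)}=(2-\mathbb{E}S_2)/z_0^{(2)}$. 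Finally, $z_0^{(1)}=z_0^{(2)}=0$ forces $Z_1,Z_2\geqslant1$ a.s., hence $S_2\geqslant2$ a.s.\ and $\mathbb{E}S_2\geqslant2$, so the net profit condition fails and Theorem~\ref{t2} applies: $\varphi\equiv0$ unless $\mathbb{P}(S_2=2)=1$, in which case $Z_1,Z_2\geqslant1$ forces $Z_1=Z_2=1$ a.s., the degenerate model with $W_u(n)\equiv u$, where $\varphi(0)=0$ and $\varphi(u)=1$ for all $u\geqslant1$ --- the stated trivial exception.
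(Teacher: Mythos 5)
Your proposal is correct and reaches all the stated formulas, but it takes a partly different route from the paper. The linear-algebra core is the same: both you and the authors solve the $2\times2$ system formed by \eqref{aiai+++} and \eqref{aiai+} for $A=m_0^{(1)}z_0^{(2)}$ and $B=m_0^{(2)}z_0^{(1)}$ (and your observation that $G_{Z_2}(\alpha)=\alpha$ would make the two equations inconsistent with $2-\mathbb{E}S_2>0$ is a justification for the nonvanishing denominator that the paper leaves implicit). Where you genuinely diverge is in identifying $\varphi(0)$: you note that $\sup_{n\geqslant1}\sum_{j=1}^n(Z_j-1)=(Z_1-1)+\mathcal{M}_2$ and that this sum equals $-1$ exactly when $Z_1=0$ and $\mathcal{M}_2=0$, giving $\varphi(0)=z_0^{(1)}m_0^{(2)}=B$ in one line by independence. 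The paper instead extracts the $n=1$ equation of \eqref{aiai++++}, combines it with \eqref{aiai+++} and the recursion \eqref{2+} to obtain $\varphi(0)+z_0^{(2)}\varphi(1)=2-\mathbb{E}S_2$, and only then recovers $\varphi(0)=B$; your decomposition is shorter, more transparent probabilistically, and also handles cases (ii)--(iii) uniformly (the relation $m_0^{(2)}=z_1^{(2)}m_0^{(1)}$ you use when $z_0^{(2)}=0$ is exactly what the paper gets from \eqref{aiai++++}). The one place where you are weaker than you should be is the existence and uniqueness of the root $\alpha\in(-1,0)$ of $G_{S_2}(s)=s^2$: your Rouch\'e/argument-principle sketch with an indentation at the boundary zero $s=1$ is not carried out, and you yourself flag it as the main obstacle. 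The paper does not prove this either --- it cites \cite[Cor.~15]{GJ} --- so this does not put you behind the paper, but as written your treatment of that point is a sketch rather than a proof, and you should either complete the contour argument (including the case $G_{S_2}(-1)=1$, where the root degenerates to $-1$) or cite the external result as the authors do.
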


\section{Auxiliary lemmas}\label{al}

In this section we formulate several lemmas which are used to prove Theorem \ref{t1}. The first lemma shows that under the net profit condition \eqref{2} random variables $\mathcal{M}_1,\,\mathcal{M}_2,\, \ldots,\, \mathcal{M}_N$ are not extended, i.e. $\mathbb{P}\left(\mathcal{M}_k<\infty\right)=1$ for all $k\in\{1,\,2,\,\ldots,\,N\}$.

\begin{lemma}\label{lema1}
Suppose that the DTRM(N) is generated by random variables $Z_1,\,Z_2,\,\ldots,\,Z_N$ and the net profit condition \eqref{2} is satisfied. Then, for all $k\in\{1,\,2,\,\ldots,\,N\}$,
$$
\lim_{u\rightarrow\infty}\mathbb{P}\left(\mathcal{M}_k< u\right)=1.
$$
\end{lemma}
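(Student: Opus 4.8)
The plan is to reduce the statement for each $\mathcal M_k$ to a classical fact about random walks with negative drift. Fix $k\in\{1,\dots,N\}$ and consider the random walk $T^{(k)}_m:=\sum_{j=k}^{k+m-1}(Z_j-1)$ for $m\ge 0$ (with $T^{(k)}_0=0$), so that $\mathcal M_k=\sup_{m\ge 0}T^{(k)}_m$. The increments $Z_k-1,\,Z_{k+1}-1,\dots$ are independent and, because of the $N$-periodicity $Z_j\overset d=Z_{N+j}$, the increment sequence is $N$-periodic in distribution. The key point is that along the subsequence of times that are multiples of $N$, the walk $T^{(k)}_{Nr}$ is an ordinary (aperiodic-in-the-i.i.d.-sense) random walk with i.i.d.\ increments distributed as $S_N-N=\sum_{j=1}^N Z_j-N$, whose mean is $\mathbb E S_N-N<0$ by the net profit condition \eqref{2}.

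First I would invoke the strong law of large numbers for the block walk: $T^{(k)}_{Nr}/r\to \mathbb E S_N-N<0$ almost surely as $r\to\infty$, hence $T^{(k)}_{Nr}\to-\infty$ a.s. Next I would control the walk between block endpoints. For $m=Nr+i$ with $0\le i<N$ we have $T^{(k)}_m=T^{(k)}_{Nr}+\sum_{j=k+Nr}^{k+Nr+i-1}(Z_j-1)$, and the correction term is a sum of at most $N-1$ increments; since each $Z_j$ is integrable (indeed $\mathbb E S_N<\infty$ forces $\mathbb E Z_j<\infty$ for every $j$), a Borel–Cantelli argument shows $\max_{0\le i<N}\bigl(\sum_{j=k+Nr}^{k+Nr+i-1}(Z_j-1)\bigr)/r\to 0$ a.s., so $T^{(k)}_m\to-\infty$ a.s.\ along the full sequence as well. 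Consequently $\sup_{m\ge 0}T^{(k)}_m<\infty$ almost surely, i.e.\ $\mathbb P(\mathcal M_k<\infty)=1$. Since $\mathcal M_k$ is a non-negative (extended) integer-valued random variable, $\mathbb P(\mathcal M_k<\infty)=1$ is exactly $\lim_{u\to\infty}\mathbb P(\mathcal M_k<u)=1$, because $\{\mathcal M_k<u\}\uparrow\{\mathcal M_k<\infty\}$ as $u\to\infty$.

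The main obstacle is the genuine one that $\mathcal M_k$ is the supremum over \emph{all} times, not just block times; handling the at-most-$(N-1)$-step excursions within a block is what the Borel–Cantelli step above is for, and it is the only place where a small amount of care (rather than a direct appeal to the i.i.d.\ theory) is needed. An alternative, slicker route avoiding Borel–Cantelli is to observe that $\mathcal M_k\le \max_{0\le i<N}\sup_{r\ge 0}\bigl(T^{(k)}_{Nr}+R^{(k)}_{r,i}\bigr)$ where $R^{(k)}_{r,i}$ is the within-block correction, bound $R^{(k)}_{r,i}\le \sum_{j=k+Nr}^{k+Nr+N-2}Z_j$, and note that the process $r\mapsto T^{(k)}_{Nr}+\sum_{j=k+Nr}^{k+Nr+N-2}Z_j$ is itself an i.i.d.-increment walk plus a stationary integrable term with the same negative drift $\mathbb E S_N-N$; its supremum is a.s.\ finite by the standard random-walk result (e.g.\ via Kingman's subadditive ergodic theorem or the classical $\mathbb E(\text{increment})<0\Rightarrow\sup<\infty$ statement). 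Either way the conclusion follows; I would present the Borel–Cantelli version as it is the most self-contained.
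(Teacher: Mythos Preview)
Your proof is correct and shares the paper's core strategy---use the strong law of large numbers to show the centred partial sums drift to $-\infty$ almost surely, whence the supremum is a.s.\ finite. The technical packaging differs. You decompose the walk into blocks of length $N$, apply SLLN to the i.i.d.\ block increments, and then invoke Borel--Cantelli to control the at-most-$(N{-}1)$-step excursions within each block. The paper instead writes
\[
\frac{1}{n}\sum_{j=1}^{n}(Z_j-1)=\frac{1}{N}\sum_{l=1}^{N}\frac{N}{n}\sum_{\substack{1\le j\le n\\ j\equiv l\ (\mathrm{mod}\ N)}}(Z_j-1),
\]
applies the ordinary SLLN to each residue class separately, and thereby obtains a.s.\ convergence of the full averages along \emph{all} $n$ in one stroke, avoiding any separate treatment of partial blocks; it then finishes with an explicit $\varepsilon$-splitting (a finite initial segment versus the tail event $\{\sum_{j=1}^{k}\widehat Z_j\le 0\ \text{for all }k\ge M\}$) rather than your direct ``$T_m\to-\infty$ a.s.\ $\Rightarrow$ $\sup_m T_m<\infty$ a.s.'' step. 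The residue-class decomposition is slightly slicker because it eliminates the Borel--Cantelli bookkeeping, but both routes are valid and of comparable length.
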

\begin{proof}
We consider the case $k=1$ only and note that other cases $k\in\{2,\,3,\,\ldots,\,N\}$ can be derived similarly.
According to the strong law of large numbers for random variables $\widehat{Z}_j:=Z_j-1,\, j\in\mathbb{N},$ we obtain that the sequence
$$
\frac{1}{n}\sum_{j=1}^n\widehat{Z}_j=\frac{1}{N}\left(\frac{N}{n}\sum_{\substack{{j=1}\\ j\equiv 1\,{\rm mod}\,N}}^n\widehat{Z}_j+\ldots+\frac{N}{n}\sum_{\substack{{j=1}\\ j\equiv N\,{\rm mod}\,N}}^n\widehat{Z}_j\right),
$$
as $n\to\infty$, converges almost surely to the negative number $(\mathbb{E}S_N-N)/N:=-\mu$.  Therefore,
$$
\mathbb{P}\left(\sup_{k\geqslant n}\bigg{|}\frac{1}{k}\sum_{j=1}^k\widehat{Z}_j+\mu\bigg{|}<\frac{\mu}{2}\right)
\mathop{\rightarrow}_{n\rightarrow\infty}1.
$$
Therefore, for $0<\varepsilon<1$, the previous relation yields
\begin{align*}
\mathbb{P}\left(\bigcap_{k=n}^\infty\bigg{\{}\sum_{j=1}^kZ_j\leqslant 0\bigg{\}}\right)\geqslant
\mathbb{P}\left(\sup_{k\geqslant n}\bigg{|}\frac{1}{k}\sum_{j=1}^k\widehat{Z}_j+\mu\bigg{|}<\frac{\mu}{2}\right)\geqslant 1-\varepsilon
\end{align*}
if $n>M=M(\varepsilon)$.\\ Consequently, for such  $\varepsilon$, we have that
\begin{align*}
\mathbb{P}(\mathcal{M}_1< u)&=\mathbb{P}\left(\sup_{k\geqslant 1}\left(\sum_{j=1}^k\widehat{Z}_j\right)^+< u\right)\\ &\geqslant
\mathbb{P}\left(\bigcap_{k=1}^{M-1}\left\{\sum_{j=1}^k\widehat{Z}_j< u\right\}\cap\bigcap_{k=M}^{\infty}\left\{\sum_{j=1}^k\widehat{Z}_j\leqslant 0\right\}\right)
\\
&\geqslant\mathbb{P}\left(\bigcap_{k=1}^{M-1}\left\{\sum_{j=1}^k\widehat{Z}_j< u\right\}\right)+
\mathbb{P}\left(\bigcap_{k=M}^{\infty}\left\{\sum_{j=1}^k\widehat{Z}_j\leqslant 0\right\}\right)-1\\
&\geqslant\mathbb{P}\left(\bigcap_{k=1}^{M-1}\left\{\sum_{j=1}^k\widehat{Z}_j< u\right\}\right)-\varepsilon.
\end{align*}
The last estimate implies
$$
\liminf_{u\rightarrow\infty}\mathbb{P}\left(\mathcal{M}_1< u\right)\geqslant 1-\varepsilon
$$
and the assertion of Lemma follows due to $\varepsilon>0$ being as small as we want.
\end{proof}

Let us denote
$$
\varphi_k(u):=\mathbb{P}\left(\sup_{n\geqslant k}\sum_{j=k}^n\,\left(Z_j-1\right)<u\right),\ k\in\{1,\,2,\,\ldots,\,N\}.
$$
Obviously, according to \eqref{1}, $\varphi_1(u)=\varphi(u)$ for $u\in\mathbb{N}_0$. Lemma \ref{lema1} implies the following property of the defined probabilities $\varphi_k(u)$.

\begin{corollary}\label{papa-}
If the DTRM(N) is generated by random variables $Z_1,\,Z_2,\,\ldots,\,Z_N$ and the net profit condition \eqref{2} is satisfied, then $\lim_{u\rightarrow\infty}\varphi_k(u)=1$ for all $k\in\{1,\,2,\,\ldots,\,N\}$.
\end{corollary}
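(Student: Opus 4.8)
The plan is to deduce the corollary directly from Lemma \ref{lema1}, with only a short reduction in between. For each fixed $k\in\{1,2,\ldots,N\}$ set $A_k:=\sup_{n\geqslant k}\sum_{j=k}^n(Z_j-1)$. Since the map $t\mapsto t^+$ is nondecreasing and continuous it commutes with suprema, so $\sup_{n\geqslant k}(\sum_{j=k}^n(Z_j-1))^+=A_k^+$, i.e. $\mathcal{M}_k=A_k^+$. Next, for every real $a$ and every $u>0$ we have $a<u$ if and only if $a^+<u$ (if $a\geqslant 0$ then $a^+=a$; if $a<0$ then both inequalities hold trivially). Applying this with $a=A_k$ gives $\{A_k<u\}=\{\mathcal{M}_k<u\}$ for all $u>0$, hence $\varphi_k(u)=\mathbb{P}(A_k<u)=\mathbb{P}(\mathcal{M}_k<u)$ for every integer $u\geqslant 1$.

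Given this identity the conclusion is immediate: $\lim_{u\to\infty}\varphi_k(u)=\lim_{u\to\infty}\mathbb{P}(\mathcal{M}_k<u)=1$ by Lemma \ref{lema1}, and this holds for every $k\in\{1,2,\ldots,N\}$.

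A fully self-contained alternative would be to rerun the proof of Lemma \ref{lema1} verbatim with the walk $\sum_{j=1}^n\widehat Z_j$, $\widehat Z_j:=Z_j-1$, replaced by the shifted walk $\sum_{j=k}^n\widehat Z_j$: the sequence $\widehat Z_k,\widehat Z_{k+1},\ldots$ is still independent and $N$-periodic in distribution, its Cesàro averages converge almost surely to $(\mathbb{E}S_N-N)/N=-\mu<0$ by the strong law of large numbers applied along the $N$ residue classes modulo $N$, and the same truncation and union-bound estimate then yields $\liminf_{u\to\infty}\varphi_k(u)\geqslant 1-\varepsilon$ for arbitrary $\varepsilon\in(0,1)$.

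There is no genuine obstacle here; the corollary really is a corollary. The only point needing a line of care is that passing from the signed supremum $A_k$ to its positive part $\mathcal{M}_k=A_k^+$ changes nothing at positive thresholds — equivalently, that $\mathcal{M}_k<\infty$ almost surely, which is precisely the content of Lemma \ref{lema1} — after which Lemma \ref{lema1} carries the whole argument.
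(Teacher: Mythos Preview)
Your proof is correct and follows exactly the route intended by the paper: the corollary is stated immediately after Lemma \ref{lema1} with no separate proof, so it is meant to be a direct consequence, and you have simply made explicit the one-line identification $\varphi_k(u)=\mathbb{P}(\mathcal{M}_k<u)$ for $u\geqslant 1$ via $\mathcal{M}_k=A_k^+$ and $\{A_k<u\}=\{A_k^+<u\}$ for positive $u$. There is nothing to add.
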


The next lemma provides one important distribution property of random variables $\mathcal{M}_1, \mathcal{M}_2,\ldots, \mathcal{M}_N$ which are defined in \eqref{3}.

\begin{lemma}\label{lema2}
Suppose that the DTRM(N) is generated by random variables $Z_1,\,Z_2,\,\ldots,\,Z_N$ with $N\geqslant 2$. If the net profit condition \eqref{2} is satisfied, then:
$$
\left(\mathcal{M}_k+\widehat{Z}_{k-1}\right)^+\mathop{=}\limits^{d}\mathcal{M}_{k-1},\ k\in\{2,\,3,\,\ldots,\,N\},\ \left(\mathcal{M}_1+\widehat{Z}_{N}\right)^+\mathop{=}\limits^{d}\mathcal{M}_{N}.
$$
\end{lemma}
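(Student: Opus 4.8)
The plan is to prove the distributional identity by unfolding the definition of $\mathcal{M}_{k-1}$ one step and recognizing the tail as a copy of $\mathcal{M}_k$. Recall $\mathcal{M}_{k-1}=\sup_{n\geqslant k-1}\big(\sum_{j=k-1}^n\widehat Z_j\big)^+$ with $\widehat Z_j=Z_j-1$. Splitting off the first summand $\widehat Z_{k-1}$, the sum over $n\geqslant k-1$ decomposes into the value at $n=k-1$, namely $\widehat Z_{k-1}$, and the values at $n\geqslant k$, namely $\widehat Z_{k-1}+\sum_{j=k}^n\widehat Z_j$. First I would use the elementary fact that for $a\in\R$ and a set of reals containing $0$ implicitly through the positive part, $\big(\max\{a,\,a+\sup_{n\geqslant k}S_n^{(k)}\}\big)^+$ where $S_n^{(k)}:=\sum_{j=k}^n\widehat Z_j$; more precisely, since $\sup_{n\geqslant k-1}$ of the partial sums equals $\max\{\widehat Z_{k-1},\,\widehat Z_{k-1}+\sup_{n\geqslant k}S_n^{(k)}\}=\widehat Z_{k-1}+\max\{0,\sup_{n\geqslant k}S_n^{(k)}\}=\widehat Z_{k-1}+\mathcal{M}_k^{\circ}$, where $\mathcal{M}_k^{\circ}:=\sup_{n\geqslant k}(S_n^{(k)})^+$ — one must be slightly careful because the $(\cdot)^+$ inside the definition of $\mathcal{M}_k$ and the fact that the empty-sup convention matters, but under the net profit condition Lemma \ref{lema1} guarantees finiteness, and $\sup_{n\geqslant k}(S_n^{(k)})^+=(\sup_{n\geqslant k}S_n^{(k)})^+$ since taking positive parts termwise and then the sup is the same as sup then positive part. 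Taking the outer positive part then gives $\big(\widehat Z_{k-1}+\mathcal{M}_k\big)^+\mathop{=}\limits^{d}\mathcal{M}_{k-1}$ — but note the crucial point: $\mathcal{M}_k$ as defined depends on $Z_k,Z_{k+1},\ldots$, which are \emph{independent} of $Z_{k-1}$, so the pair $(\widehat Z_{k-1},\mathcal{M}_k)$ on the right has the same joint law as the corresponding objects built from the original sequence. Hence the identity is in fact an almost-sure identity for the natural coupling, a fortiori a distributional one.

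Next I would treat the wrap-around case $(\mathcal{M}_1+\widehat Z_N)^+\mathop{=}\limits^{d}\mathcal{M}_N$. Here one cannot argue by a pathwise splitting of a single sequence, so the argument is genuinely distributional and uses periodicity $Z_j\mathop{=}\limits^{d}Z_{N+j}$. The idea is: by the same unfolding as above applied to the shifted walk starting at index $N+1$, we have $\mathcal{M}_{N+1}:=\sup_{n\geqslant N+1}\big(\sum_{j=N+1}^n\widehat Z_j\big)^+$ satisfies $(\mathcal{M}_{N+1}+\widehat Z_N)^+\mathop{=}\limits^{d}\mathcal{M}_N$ by the pathwise argument (shifting the index $k=N+1$). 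It remains to observe $\mathcal{M}_{N+1}\mathop{=}\limits^{d}\mathcal{M}_1$: the increments $\widehat Z_{N+1},\widehat Z_{N+2},\ldots$ have, by the periodicity and independence assumptions, the same joint distribution as $\widehat Z_1,\widehat Z_2,\ldots$ (both are independent sequences with the period-$N$ marginal pattern $z^{(1)},z^{(2)},\ldots,z^{(N)}$ repeating). Therefore the suprema have the same law, and composing the two displayed facts yields $(\mathcal{M}_1+\widehat Z_N)^+\mathop{=}\limits^{d}\mathcal{M}_N$; one should also check that on the right-hand side of the pathwise identity the variable $\widehat Z_N$ is independent of $\mathcal{M}_{N+1}$ (true, since $\mathcal{M}_{N+1}$ uses only indices $>N$), matching the independence of $\widehat Z_N$ and $\mathcal{M}_1$ needed for the claimed identity to make sense distributionally at the level of the pair.

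The main obstacle I anticipate is purely bookkeeping: making the ``$\sup$ of positive parts equals positive part of $\sup$'' manipulation rigorous when sums may be empty or the sup may a priori be $+\infty$, and keeping straight that $\mathcal{M}_k$ is \emph{defined} with an inner $(\cdot)^+$ at each $n$ rather than a single outer one — these two coincide, but it needs a line. A secondary point requiring care is the independence structure: in the identity $(\mathcal{M}_k+\widehat Z_{k-1})^+\mathop{=}\limits^{d}\mathcal{M}_{k-1}$, the right side is a single random variable while the left side is a function of two; the cleanest write-up fixes the original probability space, notes $\mathcal{M}_k$ is $\sigma(Z_k,Z_{k+1},\ldots)$-measurable hence independent of $Z_{k-1}$, derives the a.s. identity $\sup_{n\geqslant k-1}\big(\sum_{j=k-1}^n\widehat Z_j\big)^+=(\widehat Z_{k-1}+\mathcal{M}_k)^+$ directly on that space, and is done — with the periodicity-plus-independence reshuffling invoked only for the single wrap-around case. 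I would also explicitly invoke Lemma \ref{lema1} to justify that all quantities are finite a.s., so that the arithmetic of positive parts is unambiguous.
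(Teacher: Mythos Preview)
Your proposal is correct and follows essentially the same route as the paper's proof: both unfold the supremum, use the elementary identity $\sup_n (a_n)^+ = (\sup_n a_n)^+$, factor out the leading increment, and invoke periodicity for the wrap-around case. The paper writes out only the wrap-around identity $(\mathcal{M}_1+\widehat Z_N)^+\stackrel{d}{=}\mathcal{M}_N$ in detail (shifting the index block $\widehat Z_N,\widehat Z_1,\widehat Z_2,\ldots$ to $\widehat Z_N,\widehat Z_{N+1},\widehat Z_{N+2},\ldots$ directly), whereas you first establish the cases $k\in\{2,\ldots,N\}$ as pathwise identities and then reduce the wrap-around case to these via the auxiliary $\mathcal{M}_{N+1}$; your explicit attention to the independence of $\widehat Z_N$ from $\mathcal{M}_{N+1}$ is a welcome clarification that the paper leaves implicit.
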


\begin{proof}
We prove the last equality only because the other ones can be derived by the same arguments. According to Lemma \ref{lema1}, the random variable $\mathcal{M}_1$ is not extended - it attains the finite values only. Hence,
\begin{footnotesize}%
\begin{align*}
&\left(\mathcal{M}_1+\widehat{Z}_N\right)^+
\mathop{=}\limits^{d}\left(\max_{k\geqslant 1}\left\{\left(\sum_{j=1}^k\widehat{Z}_j\right)^+\right\}+\widehat{Z}_N\right)^+\\
&=\left(\max\left\{\widehat{Z}^+_1,\, (\widehat{Z}_1+\widehat{Z}_2)^+,\, (\widehat{Z}_1+\widehat{Z}_2+\widehat{Z}_3)^+,\,\ldots\right\}+\widehat{Z}_N\right)^+\\
&=\left(\max\left\{0,\, \widehat{Z}_1,\, \widehat{Z}_1+\widehat{Z}_2,\, \widehat{Z}_1+\widehat{Z}_2+\widehat{Z}_3,\,\ldots\right\}+\widehat{Z}_N\right)^+\\
&=\left(\max\left\{\widehat{Z}_N,\, \widehat{Z}_N+\widehat{Z}_1,\, \widehat{Z}_N+\widehat{Z}_1+\widehat{Z}_2,\, \widehat{Z}_N+ \widehat{Z}_1+\widehat{Z}_2+\widehat{Z}_3,\,\ldots\right\}\right)^+\\
&\mathop{=}\limits^{d}\left(\max\left\{\widehat{Z}_N,\, \widehat{Z}_N+\widehat{Z}_{N+1},\, \widehat{Z}_N+\widehat{Z}_{N+1}+\widehat{Z}_{N+2},\, \widehat{Z}_N+ \widehat{Z}_{N+1}+\widehat{Z}_{N+2}+\widehat{Z}_{N+3},\,\ldots\right\}\right)^+\\
&\mathop{=}\limits^{d}\left(\max_{k\geqslant N}\left\{\sum_{j=N}^k\widehat{Z}_j\right\}\right)^+
\mathop{=}\limits^{d}\max_{k\geqslant N}\left\{\left(\sum_{j=N}^k\widehat{Z}_j\right)^+\right\}
\mathop{=}\limits^{d}\mathcal{M}_N,
\end{align*}
\end{footnotesize}%
and assertion of the lemma follows.
\end{proof}

The next lemma provides relations between the generating functions $G_{Z_k}(s)$ and $G_{\mathcal{M}_k}(s)$ for every $k\in\{1,2,\ldots,N\}$. Here
$$
G_{\mathcal{M}_k}(s)=\sum_{j=0}^\infty\mathbb{P}\left(\mathcal{M}_k=j\right)s^j,\,|s|\leqslant1.
$$

\begin{lemma}\label{lema3}
Suppose the DTRM(N) is generated by random variables $Z_1,\,Z_2,\,\ldots,\,Z_N$, $N\geqslant 2$. If the net profit condition \eqref{2} is satisfied, then, for every $s, s\neq 0, |s|\leqslant 1,$ the following equalities hold
\begin{align}\label{OR}
\begin{cases}
(s-1)\,m_0^{(2)}z_0^{(1)}&=s\,G_{\mathcal{M}_1}(s)-G_{Z_1}(s)\,G_{\mathcal{M}_2}(s)\\
(s-1)\,m_0^{(3)}z_0^{(2)}&=s\,G_{\mathcal{M}_2}(s)-G_{Z_2}(s)\,G_{\mathcal{M}_3}(s)\\
&\,\vdots\\
(s-1)\,m_0^{(N)}z_0^{(N-1)}&=s\,G_{\mathcal{M}_{N-1}}(s)-G_{Z_{N-1}}(s)\,G_{\mathcal{M}_N}(s)\\
(s-1)\,m_0^{(1)}z_0^{(N)}&=s\,G_{\mathcal{M}_{N}}(s)-G_{Z_{N}}(s)\,G_{\mathcal{M}_1}(s)
\end{cases}.
\end{align}
\end{lemma}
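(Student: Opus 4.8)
\emph{Proof idea.} The plan is to convert each of the $N$ distributional identities of Lemma~\ref{lema2} into one line of \eqref{OR}, using a single elementary computation for the generating function of a positive part.

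I would first record the following. Let $X$ and $Y$ be independent, non-negative, integer-valued and almost surely finite random variables, and put $W:=(X+Y-1)^+$. Since $W=X+Y-1$ on the event $\{X+Y\geqslant1\}$ and $W=0$ on $\{X+Y=0\}$, writing $\mathbb{E}s^{W}$ as the sum of the contributions of these two events and then using independence to factor $\mathbb{E}s^{X+Y}=G_X(s)G_Y(s)$ and $\mathbb{P}(X+Y=0)=\mathbb{P}(X=0)\mathbb{P}(Y=0)$, one obtains for every $s$ with $0<|s|\leqslant1$
\[
G_W(s)=p+\frac{1}{s}\bigl(G_X(s)\,G_Y(s)-p\bigr),\qquad p:=\mathbb{P}(X=0)\,\mathbb{P}(Y=0).
\]
By Lemma~\ref{lema1} the variables $\mathcal{M}_1,\dots,\mathcal{M}_N$ are almost surely finite, so all the series appearing below are genuine power series, absolutely convergent on $|s|\leqslant1$, and the manipulations are legitimate.

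The first $N-1$ lines of \eqref{OR} now follow directly. For $k\in\{2,3,\dots,N\}$ the random variable $\mathcal{M}_k$ is a measurable function of $Z_k,Z_{k+1},\dots$ alone, hence independent of $\widehat{Z}_{k-1}=Z_{k-1}-1$; applying the displayed identity with $X=\mathcal{M}_k$ and $Y=Z_{k-1}$ (so that $p=m_0^{(k)}z_0^{(k-1)}$) and substituting $\mathcal{M}_{k-1}\mathop{=}\limits^{d}(\mathcal{M}_k+\widehat{Z}_{k-1})^+$ from Lemma~\ref{lema2} gives
\[
s\,G_{\mathcal{M}_{k-1}}(s)=s\,m_0^{(k)}z_0^{(k-1)}+G_{Z_{k-1}}(s)\,G_{\mathcal{M}_k}(s)-m_0^{(k)}z_0^{(k-1)},
\]
which, after rearranging, is exactly the $(k-1)$-st line of \eqref{OR}.

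The last line is the only delicate point, because $\mathcal{M}_1$ depends on all of $Z_1,Z_2,\dots$ and in particular on $Z_N$, so it is \emph{not} independent of $\widehat{Z}_N$ and one cannot feed $(\mathcal{M}_1+\widehat{Z}_N)^+$ straight into the displayed identity. To circumvent this I would use the shifted variable $\widetilde{\mathcal{M}}:=\sup_{j\geqslant N+1}\bigl(\sum_{i=N+1}^{j}\widehat{Z}_i\bigr)^+$: factoring $\widehat{Z}_N$ out of every term of the supremum defining $\mathcal{M}_N$ gives $\mathcal{M}_N=(\widehat{Z}_N+\widetilde{\mathcal{M}})^+$ (this is the content of the corresponding case of Lemma~\ref{lema2}), here $\widetilde{\mathcal{M}}$ is independent of $\widehat{Z}_N$, and, because the periodicity $Z_j\mathop{=}\limits^{d}Z_{N+j}$ makes $(\widehat{Z}_{N+1},\widehat{Z}_{N+2},\dots)$ equidistributed with $(\widehat{Z}_1,\widehat{Z}_2,\dots)$, one has $\widetilde{\mathcal{M}}\mathop{=}\limits^{d}\mathcal{M}_1$, whence $\mathbb{P}(\widetilde{\mathcal{M}}=0)=m_0^{(1)}$ and $G_{\widetilde{\mathcal{M}}}=G_{\mathcal{M}_1}$. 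Applying the displayed identity with $X=\widetilde{\mathcal{M}}$ and $Y=Z_N$ (so $p=m_0^{(1)}z_0^{(N)}$) then yields
\[
s\,G_{\mathcal{M}_N}(s)=s\,m_0^{(1)}z_0^{(N)}+G_{Z_N}(s)\,G_{\mathcal{M}_1}(s)-m_0^{(1)}z_0^{(N)},
\]
which is the last line of \eqref{OR}. I expect this independence bookkeeping in the wrap-around relation to be the only real obstacle; the remaining steps are routine algebraic rearrangements.
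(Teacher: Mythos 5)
Your proof is correct and follows essentially the same route as the paper: both reduce each line of \eqref{OR} to the generating function of $(X+Y-1)^+$ for independent, non-negative, integer-valued $X,Y$ via Lemma~\ref{lema2}, and your displayed identity is exactly the paper's inline computation of $\mathbb{E}\,s^{(\mathcal{M}_2+\widehat{Z}_1)^+}$ packaged as a reusable formula. Your explicit handling of the wrap-around line via the independent shifted variable $\widetilde{\mathcal{M}}$ is in fact more careful than the paper, which proves only the first equation and dismisses the rest --- including the last one, where $\mathcal{M}_1$ and $Z_N$ are \emph{not} independent --- as following ``by the same arguments.''
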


\begin{proof} We prove the first equation in (\ref{OR}) only and note that other equations can be proved by the same arguments. According to Lemma \ref{lema2}, we have
$$
\Big(\mathcal{M}_2+\widehat{Z}_1\Big)^+\mathop{=}\limits^{d}\mathcal{M}_1.
$$

Therefore, using a basic expectation properties, for $s, s\neq 0, |s|\leqslant 1,$ we get

\begin{align*}
G_{\mathcal{M}_1}(s)&=\mathbb{E}\left(s^{\mathcal{M}_1}\right)
=\mathbb{E}\Big(s^{\left(\mathcal{M}_2+\widehat{Z}_1\right)^+}\,\Big)
=\mathbb{E}\left(s^{\left(\mathcal{M}_2+\widehat{Z}_1\right)^+}\sum_{j=0}^\infty\ind_{\{\mathcal{M}_2=j\}}\right)\\
&=\mathbb{E}\left(\sum_{j=0}^\infty s^{\left(j+{Z}_1-1\right)^+}\ind_{\{\mathcal{M}_2=j\}}\right)=
\sum_{j=0}^\infty m_j^{(2)}\mathbb{E}\Big(s^{\left(j+{Z}_1-1\right)^+}\Big)\\
&=m_0^{(2)}\mathbb{E}\Big(s^{(Z_1-1)^+}\Big)+\sum_{j=1}^\infty m_j^{(2)}\mathbb{E}\Big(s^{\left(j+{Z}_1-1\right)}\Big)\\
&=m_0^{(2)}\bigg(z_0^{(1)}+\sum_{j=1}^\infty z_j^{(1)}s^{(j-1)}\bigg)+\frac{1}{s}\, \mathbb{E}\big(s^{Z_1}\big)
\sum_{j=1}^\infty m_j^{(2)}s^j\\
&=m_0^{(2)}\left(z_0^{(1)}+\frac{1}{s}\left(G_{Z_1}(s)-z_0^{(1)}\right)\right)+\frac{1}{s}G_{Z_1}(s)
\left(G_{\mathcal{M}_2}(s)-m_0^{(2)}\right)\\
&=m_0^{(2)}z_0^{(1)}\left(1-\frac{1}{s}\,\right)+ \frac{1}{s}\,G_{Z_1}(s)\,G_{\mathcal{M}_2}(s).
\end{align*}
Since $s\neq 0$, the desired result follows by multiplying both sides of the last equality by $s$. Lemma is proved.
\end{proof}

The following lemma investigates when the expectations of  random variables $\mathcal{M}_1,\,\mathcal{M}_2,\,\ldots,\,\mathcal{M}_N$ are finite.

\begin{lemma}\label{lema5}
Suppose the DTRM(N) is generated by random variables $Z_1,\,Z_2,\,\ldots,\,Z_N$, $N\geqslant 2$ and recall that random variables $\mathcal{M}_1,\,\mathcal{M}_2,\,\ldots,\,\mathcal{M}_N$ are defined in \eqref{3}. If the net profit condition \eqref{2} is satisfied, then:
\\ \indent {\rm (i)} $\ \mathbb{E}\mathcal{M}_1<\infty$ \ $\Leftrightarrow$\  $\mathbb{E}\mathcal{M}_k<\infty$  for all $k\in\{1,\,2,\,\ldots,\,N\}$,\\
\indent {\rm (ii)} $\mathbb{E}\mathcal{M}_1=\infty$ \ $\Leftrightarrow$\  $\mathbb{E}\mathcal{M}_k=\infty$  for all $k\in\{1,\,2,\,\ldots,\,N\}$.
\end{lemma}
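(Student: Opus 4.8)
The plan is to deduce the whole statement from the distributional identities of Lemma~\ref{lema2}, combined with the elementary inequality $(x+y)^+\le x+y^+$, valid for every real $y$ whenever $x\ge 0$. First I would record that the net profit condition makes each summand finite: since $\mathbb{E}Z_j\ge 0$ for every $j$ and $\sum_{j=1}^N\mathbb{E}Z_j=\mathbb{E}S_N<N$, every $\mathbb{E}Z_j$ is finite and the whole sum stays strictly below $N$.

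Next I apply Lemma~\ref{lema2}. For $k\in\{2,3,\ldots,N\}$ we have $\mathcal{M}_{k-1}\dist(\mathcal{M}_k+\widehat{Z}_{k-1})^+$, and since $\mathcal{M}_k\ge 0$ while $\widehat{Z}_{k-1}^+=(Z_{k-1}-1)^+\le Z_{k-1}$, the elementary inequality gives $(\mathcal{M}_k+\widehat{Z}_{k-1})^+\le \mathcal{M}_k+Z_{k-1}$ pointwise. Taking expectations, which is legitimate as an identity in $[0,\infty]$ because the two sides of the distributional identity are nonnegative and share the same law, yields
\begin{equation*}
\mathbb{E}\mathcal{M}_{k-1}\le \mathbb{E}\mathcal{M}_k+\mathbb{E}Z_{k-1},\qquad k\in\{2,3,\ldots,N\},
\end{equation*}
and the remaining identity $\mathcal{M}_N\dist(\mathcal{M}_1+\widehat{Z}_N)^+$ gives in the same way $\mathbb{E}\mathcal{M}_N\le \mathbb{E}\mathcal{M}_1+\mathbb{E}Z_N$. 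These $N$ inequalities link the expectations in a cycle $\mathcal{M}_1\to\mathcal{M}_2\to\cdots\to\mathcal{M}_N\to\mathcal{M}_1$.

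Finally, chaining the inequalities along this cycle, for each $k\in\{1,\ldots,N\}$ I obtain
\begin{equation*}
\mathbb{E}\mathcal{M}_1\le \mathbb{E}\mathcal{M}_k+\sum_{j=1}^{k-1}\mathbb{E}Z_j\le \mathbb{E}\mathcal{M}_k+N,\qquad \mathbb{E}\mathcal{M}_k\le \mathbb{E}\mathcal{M}_1+\sum_{j=k}^{N}\mathbb{E}Z_j\le \mathbb{E}\mathcal{M}_1+N,
\end{equation*}
since both of the finite remainder sums are bounded by $\mathbb{E}S_N<N$. Reading these two bounds in $[0,\infty]$ shows at once that $\mathbb{E}\mathcal{M}_1<\infty$ forces $\mathbb{E}\mathcal{M}_k<\infty$ and conversely, which is assertion~(i), and passing to complements gives assertion~(ii); both hold for every $k\in\{1,2,\ldots,N\}$.

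The computation here is essentially trivial; the only two points that deserve a sentence of care are the justification that a distributional identity between nonnegative random variables transfers to an equality of (possibly infinite) expectations, and the use of the one-sided bound $(x+y)^+\le x+y^+$ instead of an equality, which is precisely what keeps the estimate pointing in the useful direction. Beyond bookkeeping around the length-$N$ cycle there is no real obstacle.
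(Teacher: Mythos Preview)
Your proof is correct and follows essentially the same route as the paper: both arguments convert the distributional recursions of Lemma~\ref{lema2} into inequalities of the form $\mathbb{E}\mathcal{M}_{k-1}\le\mathbb{E}\mathcal{M}_k+c_k$ with finite $c_k$, and then chain them to compare every $\mathbb{E}\mathcal{M}_k$ with $\mathbb{E}\mathcal{M}_1$. The only cosmetic difference is that the paper writes down a second, reverse inequality $\mathcal{M}_k+\widehat{Z}_{k-1}\le\mathcal{M}_{k-1}$ directly from the definitions, whereas you obtain both directions by running a single one-sided bound all the way around the cycle $\mathcal{M}_1\to\mathcal{M}_2\to\cdots\to\mathcal{M}_N\to\mathcal{M}_1$.
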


\begin{proof}
According to Lemma \ref{lema1}, the random variables $\mathcal{M}_1,\,\mathcal{M}_2,\,\ldots,\,\mathcal{M}_N$ are non-extended - they attain the finite values only. Hence,
\begin{multline*}
\mathcal{M}_2+\widehat{Z}_1
\mathop{=}\limits^{d}\max_{k\geqslant 2}\left(\sum_{j=2}^k\widehat{Z}_j\right)^++\widehat{Z}_1
\mathop{=}\limits^{d}\widehat{Z}_1+\max\left\{0,\, \widehat{Z}_2,\,\widehat{Z}_2+\widehat{Z}_3,\,\ldots\right\}\\
\leqslant \max\left\{0,\widehat{Z}_1,\, \widehat{Z}_1+\widehat{Z}_2,\,\widehat{Z}_1+\widehat{Z}_2+\widehat{Z}_3,\,\ldots\right\}
\mathop{=}\limits^{d}\mathcal{M}_1,
\end{multline*}
with probability one.

Analogously, $\mathbb{P}\left(\mathcal{M}_k+\widehat{Z}_{k-1}\leqslant \mathcal{M}_{k-1}\right)=1$ for all $k\in\{3,\,4,\,\ldots,\,N\}$, and, consequently
\begin{equation}\label{pyp1}
\mathbb{E}\mathcal{M}_{k}+\mathbb{E}\widehat{Z}_{k-1}\leqslant\mathbb{E}\mathcal{M}_1,\ k\in\{1,\,2,\,\ldots,\,N\}.
\end{equation}
Similarly,
\begin{multline*}
\mathcal{M}_1
\mathop{=}\limits^{d}\max\left\{0,\,\widehat{Z}_1,\, \widehat{Z}_1+\widehat{Z}_2,\,\widehat{Z}_1+\widehat{Z}_2+\widehat{Z}_3,\,\ldots\right\}\\ \leqslant\max\left\{0,\,\widehat{Z}_1\right\}+\max\left\{0,\,\widehat{Z}_2,\,\widehat{Z}_2+\widehat{Z}_3,\,\ldots\right\}
\mathop{=}\limits^{d}\widehat{Z}_1^++\mathcal{M}_2,
\end{multline*}
with probability one, implying that,
$$
\mathbb{E}\mathcal{M}_1\leqslant\mathbb{E}\widehat{Z}_1^++\mathbb{E}\mathcal{M}_2,
$$
where $\mathbb{E}\widehat{Z}_1+z^{(1)}_0=\mathbb{E}\widehat{Z}_1^+<N-1+z^{(1)}_0$.

    By the same arguments
$$
\mathbb{E}\mathcal{M}_2\leqslant\mathbb{E}\widehat{Z}_2^++\mathbb{E}\mathcal{M}_3,
$$
where $\mathbb{E}\widehat{Z}_2+z^{(2)}_0=\mathbb{E}\widehat{Z}_2^+<N-1+z^{(2)}_0$,
and
\begin{equation}\label{pyp2}
\mathbb{E}\mathcal{M}_k\leqslant\mathbb{E}\widehat{Z}_k^++\mathbb{E}\mathcal{M}_{k+1},\ k\in\{1,\,2,\,\ldots,\,N-1\}
\end{equation}
with finite expectations $\mathbb{E}\widehat{Z}_k^+$.
The assertion of the lemma follows from the derived estimates \eqref{pyp1} and \eqref{pyp2}.
\end{proof}

In the next lemma we calculate one specific limit consisting from the first two infinite moments of integer valued random variable.

\begin{lemma}\label{lema4}
Let $X$ be an integer valued random variable and denote its probability generating function
$$
G_X(s)=\sum_{j=1}^\infty\mathbb{P}(X=j)s^j=:\sum_{j=1}^\infty p_js^j.
$$
If $\mathbb{E}X=\infty$, then
$$
\lim_{s\rightarrow 1^-}\frac{\big(G_X^{\prime}(s)\big)^2}{G_X^{\prime\prime}(s)}=0.
$$
\end{lemma}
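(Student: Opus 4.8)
The plan is to bound $G_X'(s)$ from above by a constant multiple of $\sqrt{G_X''(s)}$ via Cauchy--Schwarz applied to a tail of the defining series, and then to exploit that $G_X''(s)\to\infty$ as $s\to1^-$; dividing by $G_X''(s)$ and sending first $s\to1^-$ and then the truncation level to infinity will give the claim.

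First I would establish the boundary behaviour of the two derivatives. Monotone convergence gives $G_X'(s)\uparrow\mathbb{E}X=\infty$ and $G_X''(s)\uparrow\mathbb{E}[X(X-1)]$ as $s\to1^-$. The key observation is that $\mathbb{E}[X(X-1)]=\infty$ as well: since $j(j-1)\ge j$ for every integer $j\ge2$, we have $\mathbb{E}[X(X-1)]\ge\mathbb{E}[X\mathbf{1}_{\{X\ge2\}}]=\mathbb{E}X-p_1=\infty$. Thus $G_X''(s)\to\infty$ as $s\to1^-$; this is the only place where the hypothesis $\mathbb{E}X=\infty$ is really used, and it is precisely what the conclusion needs.

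Next, fix an integer $m\ge1$ and split $G_X'(s)=\sum_{j=1}^m jp_js^{j-1}+\sum_{j>m}jp_js^{j-1}$. For $0<s\le1$ the head is at most the constant $a_m:=\sum_{j=1}^m jp_j$, while for the tail Cauchy--Schwarz gives
\[
\Big(\sum_{j>m}jp_js^{j-1}\Big)^2\le\Big(\sum_{j>m}p_js^{j-1}\Big)\Big(\sum_{j>m}j^2p_js^{j-1}\Big)\le\delta_m\sum_{j>m}j^2p_js^{j-1},
\]
where $\delta_m:=\sum_{j>m}p_j\to0$ as $m\to\infty$, being the tail of the convergent series $\sum_{j\ge1}p_j\le1$. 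A termwise comparison finishes this step: for $j>m$ and $0<s\le1$ one has $js\le j\le\frac{m+1}{m}(j-1)$, hence $j^2s^{j-1}\le\frac{m+1}{m}j(j-1)s^{j-2}$, so $\sum_{j>m}j^2p_js^{j-1}\le\frac{m+1}{m}G_X''(s)$. Altogether $G_X'(s)\le a_m+\sqrt{\tfrac{m+1}{m}\,\delta_m\,G_X''(s)}$.

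Finally I would substitute this into the quotient and estimate, for fixed $m$ and all $s$ close to $1$,
\[
0\le\frac{(G_X'(s))^2}{G_X''(s)}\le\frac{a_m^2}{G_X''(s)}+\frac{2a_m\sqrt{\tfrac{m+1}{m}\delta_m}}{\sqrt{G_X''(s)}}+\frac{m+1}{m}\,\delta_m .
\]
Letting $s\to1^-$ and using $G_X''(s)\to\infty$ kills the first two terms, so $\limsup_{s\to1^-}(G_X'(s))^2/G_X''(s)\le\frac{m+1}{m}\delta_m$; letting $m\to\infty$ sends the right-hand side to $0$, which proves the lemma. The only subtlety is the one already flagged --- verifying $G_X''(s)\to\infty$ --- while the remaining steps are a routine truncation plus Cauchy--Schwarz.
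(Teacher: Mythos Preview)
Your proof is correct and follows essentially the same strategy as the paper's: truncate the series at a level $m$ (the paper's $M$), apply Cauchy--Schwarz to the tail to produce a bound of the form $\tfrac{m+1}{m}\sum_{j>m}p_j$, use the divergence forced by $\mathbb{E}X=\infty$ to dispose of the head contribution, and let $m\to\infty$. The only cosmetic differences are that you split $G_X'$ additively and invoke the standard Cauchy--Schwarz, whereas the paper rewrites the ratio multiplicatively and cites the Engel/Sedrakyan form; the resulting bounds and the limiting argument are identical.
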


\begin{proof} Suppose that $|s|<$. For any fixed natural $M\geqslant 2$ we get
\begin{equation*}
\frac{\big(G_X^{\prime}(s)\big)^2}{G_X^{\prime\prime}(s)}=
\frac{\Big(\sum_{j=1}^\infty jp_js^{j}\Big)^2}{\sum_{j=1}^\infty j(j-1)p_js^{j}}
=\frac{\Big(\sum_{j=M}^\infty jp_js^{j}\Big)^2}{\sum_{j=M}^\infty j(j-1)p_js^{j}}\,
\frac{\Bigg(1+\frac{\sum_{j=1}^{M-1}jp_js^{j}}{\sum_{j=M}^\infty jp_js^{j}}\Bigg)^2}
{\Bigg(1+\frac{\sum_{j=1}^{M-1}j(j-1)p_js^{j}}{\sum_{j=M}^\infty j(j-1)p_js^{j}}\Bigg)}.
\end{equation*}
Using a special version of Cauchy-Schwarz inequality (know as Sedrakyan's inequality, Bergström's inequality, Engel's form or Titu's lemma, see \cite{Caushcy}) we obtain that
\begin{equation*}
\frac{\Big(\sum_{j=M}^\infty jp_js^{j}\Big)^2}{\sum_{j=M}^\infty j(j-1)p_js^{j}}\leqslant
\sum_{j=M}^\infty\frac{j}{j-1}p_js^j\leqslant\frac{M}{M-1}\sum_{j=M}^\infty p_j.
\end{equation*}
Therefore,
\begin{multline*}
\limsup_{s\rightarrow 1^-}\frac{\big(G_X^{\prime}(s)\big)^2}{G_X^{\prime\prime}(s)}\leqslant
\frac{M}{M-1}\sum_{j=M}^\infty p_j\limsup_{s\rightarrow 1^-}\frac{\Bigg(1+\frac{\sum_{j=1}^{M-1} jp_js^{j}}{\sum_{j=M}^\infty jp_js^{j}}\Bigg)^2}
{\Bigg(1+\frac{\sum_{j=1}^{M-1} j(j-1)p_js^{j}}{\sum_{j=M}^\infty j(j-1)p_js^{j}}\Bigg)}=\\=\frac{M}{M-1}\sum_{j=M}^\infty p_j.
\end{multline*}
The assertion of the lemma follows because the last estimate is valid for any arbitrary $M\geqslant 2$.
\end{proof}

\section{Proof of Theorem \ref{t1}}\label{pt1}

In this section we prove the first main theorem.

\begin{proof}[Proof of Theorem 3.1]
We first prove the equality \eqref{aiai+}. The equations \eqref{OR} in Lemma \ref{lema3} 
\begin{tiny}%
\begin{align*}
\begin{cases}
\frac{s-1}{s}\,m_0^{(2)}z_0^{(1)}G_{Z_N}(s)&=G_{\mathcal{M}_1}(s)G_{Z_N}(s)-
\frac{1}{s}\,G_{\mathcal{M}_2}(s)G_{Z_N}(s)G_{Z_1}(s)\\
\frac{s-1}{s^2}\,m_0^{(3)}z_0^{(2)}G_{Z_N}(s)G_{Z_1}(s)&=\frac{1}{s}\,G_{\mathcal{M}_2}(s)G_{Z_N}(s)G_{Z_1}(s)-
\frac{1}{s^2}\,G_{\mathcal{M}_3}(s)G_{Z_N}(s)G_{Z_1}(s)G_{Z_2}(s)\\
&\,\,\vdots\\
\frac{s-1}{s^{N-1}}\,m_0^{(N)}z_0^{(N-1)}G_{Z_N}(s)G_{Z_1}(s)\cdots G_{Z_{N-2}}(s)&=\frac{1}{s^{N-2}}\,G_{\mathcal{M}_N}(s)G_{Z_N}(s)G_{Z_1}(s)\cdots G_{Z_{N-2}}(s)\\&\quad-
\frac{1}{s^{N-1}}\,G_{\mathcal{M}_N}(s)G_{Z_N}(s)G_{Z_1}(s)\cdots G_{Z_{N-1}}(s)\\
(s-1)\,m_0^{(1)}z_0^{(N)}&= s\,G_{\mathcal{M}_N}(s)-G_{\mathcal{M}_1}(s)G_{Z_N}(s)
\end{cases}.
\end{align*}
\end{tiny}%
Adding all these equations, we get \eqref{five}.

The equation \eqref{aiai+} of Theorem \ref{t1} follows now from the last relation.

    We now consider the equation \eqref{aiai++}. If $0<|s|<1$, then, from the equality \eqref{five}, we get
\begin{multline*}
m_0^{(1)}z_0^{(N)}+\sum_{k=1}^{N-1}m_0^{(k+1)}z_0^{(k)}G_{Z_N}(s)
\,s^{-k}\prod_{j=1}^{k-1}G_{Z_j}(s)=\\=\frac{1}{s-1}s^{1-N}\,G_{\mathcal{M}_N}(s)\left(s^N-G_{S_N}(s)\right).
\end{multline*}
The desired relation \eqref{aiai++} follows from the last equation having in mind that
\begin{equation*}
\frac{d^l}{ds^l}\left(s^N-G_{S_N}(s)\right)\Big{|}_{s=\alpha}=0
\end{equation*}
for all $l\in\{1,\,2,\,\ldots,\, \varkappa-1\}$ if the root $\alpha$ of $s^N=G_{S_N}(s)$ is of multiplicity $\varkappa$.

    We now prove the first equation \eqref{aiai+++} of the theorem. Calculating derivatives for all equalities in system \eqref{OR}, we get that for all $s$, $0<|s|<1$
\begin{footnotesize}%
\begin{align}\label{OR1}
\begin{cases}
m_0^{(2)}z_0^{(1)}&=G_{\mathcal{M}_1}(s)+sG_{\mathcal{M}_1}^\prime(s)-G^\prime_{Z_1}(s)G_{\mathcal{M}_2}(s)
-G_{Z_1}(s)G_{\mathcal{M}_2}^\prime(s)\\
m_0^{(3)}z_0^{(2)}&=G_{\mathcal{M}_2}(s)+sG_{\mathcal{M}_2}^\prime(s)-G^\prime_{Z_2}(s)G_{\mathcal{M}_3}(s)
-G_{Z_2}(s)G_{\mathcal{M}_3}^\prime(s)\\
&\,\,\vdots\\
m_0^{(N)}z_0^{(N-1)}&=G_{\mathcal{M}_{N-1}}(s)+sG_{\mathcal{M}_{N-1}}^\prime(s)-G^\prime_{Z_{N-1}}(s)G_{\mathcal{M}_N}(s)
-G_{Z_{N-1}}(s)G_{\mathcal{M}_N}^\prime(s)\\
m_0^{(1)}z_0^{(N)}&=G_{\mathcal{M}_N}(s)+sG_{\mathcal{M}_N}^\prime(s)-G^\prime_{Z_N}(s)G_{\mathcal{M}_1}(s)
-G_{Z_N}(s)G_{\mathcal{M}_1}^\prime(s)
\end{cases}.
\end{align}
\end{footnotesize}%
It is obvious that there are two possible cases: $\mathbb{E}\mathcal{M}_1<\infty$ or $\mathbb{E}\mathcal{M}_1=\infty$. If $\mathbb{E}\mathcal{M}_1<\infty$, then $\mathbb{E}\mathcal{M}_k<\infty$ for all $k\in\{1,\,2,\,\ldots,\,N\}$ due to the statement of Lemma \ref{lema5}. By letting $s\rightarrow 1^-$ from the system \eqref{OR1} we obtain
\begin{align*}
\begin{cases}
m_0^{(2)}z_0^{(1)}&=1+\mathbb{E}\mathcal{M}_1-\mathbb{E}Z_1-\mathbb{E}\mathcal{M}_2\\
m_0^{(3)}z_0^{(2)}&=1+\mathbb{E}\mathcal{M}_2-\mathbb{E}Z_2-\mathbb{E}\mathcal{M}_3\\
&\,\,\vdots\\
m_0^{(N)}z_0^{(N-1)}&=1+\mathbb{E}\mathcal{M}_{N-1}-\mathbb{E}Z_{N-1}-\mathbb{E}\mathcal{M}_N,\\
m_0^{(1)}z_0^{(N)}&=1+\mathbb{E}\mathcal{M}_N-\mathbb{E}Z_N-\mathbb{E}\mathcal{M}_1
\end{cases}.
\end{align*}
We derive the equality \eqref{aiai+++} adding all equalities of the last system.

\smallskip

If $\mathbb{E}\mathcal{M}_1=\infty$, then $\mathbb{E}\mathcal{M}_k=\infty$ for all $k\in\{1,\,2,\,\ldots,\,N\}$ due to Lemma \ref{lema5}. If that is the case, then, by summing equalities of system \eqref{OR1} and letting $s\to1^-$, we get
\begin{multline}\label{papa}
m_0^{(1)}z_0^{(N)}+\sum_{k=1}^{N-1}m_0^{(k+1)}z_0^{(k)}=\\= \lim_{s\rightarrow 1^-}\Bigg\{\sum_{k=1}^NG_{\mathcal{M}_k}(s)-
\sum_{k=1}^{N-1}G_{Z_k}^\prime(s)G_{\mathcal{M}_{k+1}}(s)-G^\prime_{Z_N}(s)G_{\mathcal{M}_1}(s)\nonumber\\
\quad +\big(s-G_{Z_N}(s)\big)G^\prime_{\mathcal{M}_1}(s)+\sum_{k=2}^N\big(s-G_{Z_{k-1}}(s)\big)G^\prime_{\mathcal{M}_k}(s)\Bigg\}.
\end{multline}
In view of Lemma \ref{lema1},
$$
\lim_{s\rightarrow 1^-}\sum_{k=1}^NG_{\mathcal{M}_k}(s)=N.
$$
The net profit condition \eqref{2} together with Lemma \ref{lema1} imply that
$$
\lim_{s\rightarrow 1^-}\left\{\sum_{k=1}^{N-1}G_{Z_k}^\prime(s)G_{\mathcal{M}_{k+1}}(s)+G^\prime_{Z_N}(s)G_{\mathcal{M}_1}(s)\right\}
=\mathbb{E}S_N.
$$
By L'Hospital's rule and Lemma \ref{lema4}, we get
\begin{multline}
\lim_{s\rightarrow 1^-}\big(s-G_{Z_N}(s)\big)G^\prime_{\mathcal{M}_1}(s)
=\lim_{s\rightarrow 1^-}\frac{\big(s-G_{Z_N}(s)\big)^\prime}{\bigg(\frac{1}{G'_{\mathcal{M}_1}(s)}\bigg)^\prime}
=\\=\lim_{s\rightarrow 1^-}\,\frac{1-G^\prime_{Z_N}(s)}{-\frac{G^{\prime\prime}_{\mathcal{M}_1}(s)}{\big(G^\prime_{\mathcal{M}_1}(s)\big)^2}}=0.
\end{multline}
By similar arguments,
$$
\lim_{s\rightarrow 1^-}\big(s-G_{Z_{k-1}}(s)\big)G^\prime_{\mathcal{M}_k}(s)=0
$$
for all $k\in\{2,\,3,\,\ldots,\,N\}$. 
Substituting these limits into \eqref{papa} we get the equality \eqref{aiai+++}.

    It remains to derive the system of equalities \eqref{aiai++++}. Let us consider the system of equalities \eqref{OR} from Lemma \ref{lema3}. 
By expanding the power series in the right hand-sides of \eqref{OR} and collecting the coefficients
we obtain
\begin{align}\label{deriv_n}
z_0^{(N)}m_n^{(1)}=m_{n-1}^{(N)}-\sum\limits_{j=0}^{n-1}z_{n-j}^{(N)}m_j^{(1)}-m_0^{(1)}z_0^{(N)}\ind_{\{n=1\}}
\end{align}
for all natural numbers $n$.
The derived equality (\ref{deriv_n}) is the first equality in the system \eqref{aiai++++}. The other equalities of this system \eqref{aiai++++} can be derived from \eqref{OR} analogously.
  Theorem \ref{t1} is proved.
\end{proof}

\section{Proof of Theorem \ref{t2}}\label{pt2}

In Section \ref{alg}, we present an algorithm to calculate the survival probability when DTRM(N) satisfies the net profit condition \eqref{2}. Under the breach of the net profit condition, the survival probability can be obtained directly from Theorem \ref{t2}. We present the proof of this theorem below.

\begin{proof}[Proof of Theorem 3.2]  First, let us assume that $\mathbb{E} S_N>N$. In view of the equality \eqref{2+}, for all $u\in\mathbb{N}_0$ we get
\begin{align*}
\varphi(u)&=\hspace{-6mm}\sum_{\substack{i_1\leqslant u\\i_1+i_2\leqslant u+1\\i_1+i_2+i_3\leqslant u+2\\ \ldots  \\
i_1+i_2+\ldots+i_N\leqslant u+N-1}}\hspace{-5mm}z_{i_1}^{(1)}z_{i_2}^{(2)}\cdots z_{i_N}^{(N)}\,
\varphi\left(u+N-\sum_{j=1}^Ni_j\right)\\
&=\sum_{j=1}^{u+N}s_{u+N-j}^{(N)}\,\varphi(j)-\sum_{j=1}^{N-1}\mu_j(u)\,\varphi(j),
\end{align*}
where $\mu_j(u)\geqslant0$ represent missing terms in $s_{u+N-j}^{(N)}$.
Therefore, defining that $\mu_0(u):=s_{u+N}^{(N)}$, we get
\begin{align*}
\varphi(u)&=\sum_{j=0}^{u+N}s_{u+N-j}^{(N)}\,\varphi(j)-\sum_{j=0}^{N-1}\mu_j(u)\,\varphi(j).
\end{align*}
By summing the both sides of the last equality by $u$, which varies from $0$ to some sufficiently large natural $v$, we obtain
\begin{align*}
\sum_{u=0}^v\varphi(u)&=\sum_{u=0}^v\sum_{j=0}^{u+N}s_{u+N-j}^{(N)}\,\varphi(j)
-\sum_{u=0}^v\sum_{j=0}^{N-1}\mu_j(u)\,\varphi(j)\\
&=\sum_{j=0}^{N-1}\varphi(j)\sum_{u=0}^vs_{u+N-j}^{(N)}+\sum_{j=N}^{v+N}\varphi(j)\sum_{u=j-N}^{v}s_{u+N-j}^{(N)}
-\\&-\sum_{j=0}^{N-1}\varphi(j)\sum_{u=0}^v\mu_j(u).
\end{align*}
Or, equivalently,
\begin{align}\label{ga}
&\sum_{u=0}^{v+N}\varphi(u)\left(1-\sum_{j=0}^{N+v-u}s_{j}^{(N)}\right)-\sum_{u=v+1}^{v+N}\varphi(u)=\nonumber\\
&=\sum_{j=0}^{N-1}\varphi(j)\left(\sum_{u=0}^vs_{u+N-j}^{(N)}-\sum_{u=j-N}^{v}s_{u+N-j}^{(N)}
-\sum_{u=0}^{v}\mu_j(u)\right),\nonumber\\
&\sum_{u=v+1}^{v+N}\varphi(u)-\sum_{u=0}^{v+N}\varphi(u)\mathbb{P}\left(S_N\geqslant v+N-u+1\right)= \nonumber\\
&=\sum_{j=0}^{N-1}\varphi(j)\left(\mathbb{P}(S_N\leqslant N-j-1)+\sum_{u=0}^{v}\mu_j(u)\right).
\end{align}
As the survival probability $\varphi(u)$ is non-decreasing, there exists non-negative limit
$\varphi(\infty):=\lim_{u\rightarrow\infty}\varphi(u)$.
By letting $v\to\infty$, from the equality \eqref{ga}, we get
$$
\varphi(\infty)\big(N-\mathbb{E}S_N\big)\geqslant 0.
$$
See, \cite[p. 4]{GS} or \cite[p. 13]{GS1} for the limit assesment of the left-hand side of \eqref{ga}.
The last inequality, together with condition $\mathbb{E}\,S_N>N$, implies $\varphi(\infty)=0$, and consequently $\varphi(u)=0$ for all $u\in\mathbb{N}_0$.

    We now suppose that $\mathbb{E}\, S_N = N$ and  $\mathbb{P}(S_N=N)<1$. In this case, there is at least one positive probability out of $s_0^{(N)},\, s_1^{(N)},\, s_2^{(N)},\,\ldots,\, s_{N-1}^{(N)}$.  Since $\mathbb{E} S_N = N$, by letting $v\to\infty$ in equation \eqref{ga}, we get
\begin{align}\label{gaga}
\sum_{j=0}^{N-1}\varphi(j)\left(\sum_{u=0}^{N-j-1}s_u^{(N)}+\sum_{u=0}^{\infty}\mu_j(u)\right)=0.
\end{align}

If $s_0^{(N)}>0$, then equation (\ref{gaga}) implies $\varphi(0)=\varphi(1)=\ldots=\varphi(N-1)=0$, and the main recursive relation \eqref{2+} implies $\varphi(u)=0$ for all $u\in\mathbb{N}_0$. If $s_0^{(N)}=0$ and $s_1^{(N)}>0$, then equation \eqref{gaga} implies $\varphi(0)=\varphi(1)=\ldots=\varphi(N-2)=0$, and the  recursive relation \eqref{2+} implies again $\varphi(u)=0$ for all $u\in\mathbb{N}_0$. Proceeding further we consider the cases $s_0^{(N)}=0,\,\ldots,\, s_k^{(N)}=0,\, s_{k+1}^{(N)}>0$ far all $k=2,\,3,\,\ldots,\, N-2$, and, using \eqref{gaga} and \eqref{2+}, we obtain $\varphi(u)=0$ for all $u\in\mathbb{N}_0$. This finishes the proof of part (i) of Theorem \ref{t2}. We note that the assertion of part (i) can be proved by using some different method present in \cite[p. 10]{GS} or \cite[p. 21]{GS1}.

If $\mathbb{E} S_N=N$ and $\mathbb{P}(S_N=N)=1$, then random variables $Z_1,\,Z_2,\,\ldots,\,Z_N$ are degenerate, i.e. $Z_k\equiv i_k$ for some $i_k\in\{0,\,1,\,\ldots,\,N\}$ and for all $k\in\{1,\,2,\,\ldots,\,N\}$. Therefore, in the case under consideration the \textit{DTRM(N}), defined by \eqref{pop}, becomes deterministic. To complete the proof of assertion (ii) of Theorem \ref{t2} it suffices to find $u\geqslant0$ such that $W_u(n)>0$ for $n=0,\,1,\,\ldots,\,N$.
\end{proof}

\section{Proof of Theorem \ref{t3}}\label{pt3}

The assertion of Theorem \ref{t3} follows from the more general statement - Theorem \ref{t1}. On the other hand, the bi-seasonal model when $N=2$ is the most simple one aiming to demonstrate everything in detail. We next prove the statement of Theorem \ref{t3}.

\begin{proof}[Proof of Theorem \ref{t3}]
Let us begin with the case (i). If $\alpha\in\mathbb(-1,0)$ is a unique root of equation $G_{S_2}(s)=s^2$ (see \cite[Cor. 15]{GJ}), then, by \eqref{aiai+},
\begin{align*}
m_0^{(1)}z_0^{(2)}+m_0^{(2)}z_0^{(1)}\frac{G_{Z_2}(\alpha)}{\alpha}=0,
\end{align*}
and \eqref{aiai+++} implies
\begin{align}\label{ty}
m_0^{(1)}z_0^{(2)}+m_0^{(2)}z_0^{(1)}=2-\mathbb{E}S_2.
\end{align}
These two equalities together with expression \eqref{cai} give the second expression of the part (i)
\begin{align}\label{phi1_N2}
\varphi(1)=m_0^{(1)}=\frac{2-\mathbb{E}S_2}{z_0^{(2)}}\,\frac{G_{Z_2}(\alpha)}{G_{Z_2}(\alpha)-\alpha}.
\end{align}
The first equation of the system \eqref{aiai++++} implies
\begin{align}\label{ta}
m_1^{(1)}z_0^{(2)}+m_0^{(1)}z_1^{(2)}+m_0^{(1)}z_0^{(2)}=m_0^{(2)}.
\end{align}
Equalities \eqref{ty} and \eqref{ta} imply
\begin{align*}
\big(m_0^{(1)}+m_1^{(1)}\big)z_0^{(1)}z_0^{(2)}=2-\mathbb{E}S_2-m_0^{(1)}\big(z_0^{(2)}-z_0^{(1)}z_1^{(2)}\big).
\end{align*}
According to formula \eqref{cai}, $\varphi(2)=m_0^{(1)}+m_1^{(1)}$ and $\varphi(1)=m_0^{(1)}$. Therefore,
\begin{align*}
z_0^{(1)}z_0^{(2)}\varphi(2)=2-\mathbb{E}S_2-\varphi(1)\big(z_0^{(2)}-z_0^{(1)}z_1^{(2)}\big).
\end{align*}
On the other hand, the main recursive relation \eqref{2+} gives that
\begin{align*}
\varphi(0)=z_0^{(1)}z_1^{(1)}\varphi(1)+z_0^{(1)}z_0^{(2)}\varphi(2).
\end{align*}
Consequently,
\begin{align}\label{tu}
\varphi(0)+z_0^{(2)}\varphi(1)=2-\mathbb{E}S_2.
\end{align}
Substituting the expression of (\ref{phi1_N2}) into the last equality (\ref{tu}) we obtain the desired expression of $\varphi(0)$. Part (i) of the theorem is proved.

We now consider the part (ii) of Theorem \ref{t3}. Since $z_0^{(1)}>0$ and $z_0^{(2)}=0$, equality \eqref{tu} implies immediately that $\varphi(0)=2-\mathbb{E}S_2$. On the other hand, equalities \eqref{ty} and \eqref{ta} give that
\begin{align*}
m_0^{(1)}z_1^{(2)}=m_0^{(2)}\ {\rm and}\  m_0^{(2)}z_0^{(1)}=2-\mathbb{E}S_2.
\end{align*}
Therefore,
\begin{align*}
\varphi(1)=m_0^{(1)}=\frac{m_0^{(2)}}{z_1^{(2)}}=\frac{2-\mathbb{E}S_2}{z_0^{(1)}z_1^{(2)}}
\end{align*}
and part (ii) is proved.

Consider the part (iii) of Theorem \ref{t3}. Conditions $z_0^{(1)}=0,\, z_0^{(2)}>0$ and equality \eqref{ty} imply
\begin{align*}
\varphi(1)=m_0^{(1)}=\frac{2-\mathbb{E}S_2}{z_0^{(2)}}.
\end{align*}
Further, in view of the last expression of $\varphi(1)$, the equality \eqref{tu} gives $\varphi(0)=0$. This is natural because the survival is impossible if the claim size at the first moment of time is at least one and the initial surplus equals zero. The remaining part for $\mathbb{E}S_2\geqslant2$ is implied by Theorem \ref{t2}. Theorem \ref{t3} is proved.
\end{proof}

\section{Guidelines for the survival probability calculation}\label{alg}

In this section we develop an algorithm for the survival probability calculation of the multi-seasonal discrete time risk model $DTRM(N)$. The case $N=2$ is completely described in Theorem \ref{t3}. If $N\geqslant 3$, the survival probability calculation algorithm is more complex. Below we provide main steps for such calculation. 
\textit{\textbf{Step A}}. In cases where $\mathbb{E}S_N=\sum_{k=1}^N\mathbb{E}Z_k>N$ or $\mathbb{E}S_N=N$ with $\mathbb{P}(S_N=N)<1$, then, according to parts (i) and (ii) of Theorem \ref{t2}, we have $\varphi(u)=0$ for all $u\in\mathbb{N}_0$.

\textit{\textbf{Step B}}. If $\mathbb{E}S_N=N$ and $\mathbb{P}(S_N=N)=1$, then random variables $Z_1,\, Z_2,\,\ldots,\, Z_N$  are degenerate, and $\varphi(u)$ equals zero or one according to the part (iii) of Theorem \ref{t2}.

\textit{\textbf{Step C}}. If the net profit condition $\mathbb{E}S_N<N$ holds then the survival probabilities $\varphi(1),\, \varphi(2),\, \ldots,\, \varphi(N+1)$ are calculated using the formula \eqref{cai}. First, we solve required probabilities $m_0^{(1)},\, m_1^{(1)},\, \ldots,\, m_N^{(1)}$ from the systems of equations in Theorem \ref{t1}. That is accomplished the following way:

\textbf{(i)} Find the roots of the equation $G_{S_N}(s)=s^N$ satisfying the condition $|s|<1$.

\begin{remark}\label{rem} \textit{In view of the well known Rouch\'e theorem (see, for instance, Chapter 10 in \cite{Rudin}) and estimate $|G_{S_N}(s)|<|\lambda s^k|$ when $\lambda>1$ and $|s|=1$, the functions $\lambda s^N$ and $G_{S_N}(s)-\lambda s^N$ have the same number of roots in the region $|s|<1$, and this number is $N$ due to $\lambda s^N$. As $\lambda \to 1^+$, the equation $G_{S_N}(s)=s^N$ has at least $N$ solutions in $|s|\leqslant1$, counted with their multiplicities. One of them always occurs at $s=1$ and its multiplicity is equal to one if the net profit condition $G^\prime_{S_N}(1)=\mathbb{E}S_N<N$ holds. Under general conditions, the remaining $N-1$ roots of the equation $G_{S_N}(s)=s^N$ lie inside the unit circle $|s|<1$, see Section 4 in \cite{GJ}.
One may also observe that a complex roots of $G_{S_N}(s)=s^N$ occur in conjugate pairs due to $G_{S_N}(\overline{s})-\overline{s}^N=\overline{G_{S_N}(s)-s^N}$, where the over-line denotes the complex conjugate.
}
\end{remark}

\textbf{(ii)} Having roots $\alpha_1,\,\alpha_2,\,\ldots,\, \alpha_{N-1}$ of the equation $G_{S_N}(s)=s^N, |s|<1,$ we use equations \eqref{aiai+++}, \eqref{aiai+}, \eqref{aiai++} and \eqref{aiai++++} of Theorem \ref{t1} to get $m_0^{(1)},\,m_0^{(2)},\,\ldots,\,m_0^{(N)}$. If probability $s_0^{(N)}$ is positive and all the roots $\alpha_1,\,\alpha_2,\,\ldots,\, \alpha_{N-1}$ are of multiplicity one, then we set up the following system
\begin{tiny}%
\begin{multline}\label{NN}
\begin{pmatrix}
z_0^{(N)}&\frac{z_0^{(1)}G_{Z_N}(\al_1)}{\al_1}&\frac{z_0^{(2)}G_{Z_N+Z_1}(\al_1)}{\al^2_1}&\ldots&
\frac{z_0^{(N-1)}G_{Z_N+Z_1+\ldots+Z_{N-2}}(\al_1)}{\al^{N-1}_1}\\
z_0^{(N)}&\frac{z_0^{(1)}G_{Z_N}(\al_2)}{\al_2}&\frac{z_0^{(2)}G_{Z_N+Z_1}(\al_2)}{\al^2_2}&\ldots&
\frac{z_0^{(N-1)}G_{Z_N+Z_1+\ldots+Z_{N-2}}(\al_2)}{\al^{N-1}_2}\\
\vdots&\vdots &\vdots & \ddots&\vdots\\
z_0^{(N)}&\frac{z_0^{(1)}G_{Z_N}(\al_{N-1})}{\al_{N-1}}&\frac{z_0^{(2)}G_{Z_N+Z_1}(\al_{N-1})}{\al^2_{N-1}}&\ldots&
\frac{z_0^{(N-1)}G_{Z_N+Z_1+\ldots+Z_{N-2}}(\al_{N-1})}{\al^{N-1}_{N-1}}\vspace{2mm}\\
z_0^{(N)}&z_0^{(1)}&z_0^{(2)}&\ldots&z_0^{(N-1)}
\end{pmatrix}
\begin{pmatrix}
m_0^{(1)}\\
m_0^{(2)}\\
\vdots\\
m_0^{(N-1)}\\
m_0^{(N)}
\end{pmatrix}
= \\=
\begin{pmatrix}
0\\
0\\
\vdots\\
0\\
N-\mathbb{E}S_N
\end{pmatrix}.
\end{multline}
\end{tiny}%


\textbf{ (iii)} If some of the roots of equation $G_{S_N}(s)=s^N$ are of multiplicity higher than one but the probability $s^{(N)}_0>0$, we replace the corresponding line (or lines) in the main matrix of system  (\ref{NN}) by line (or lines) with coefficients based on the corresponding equality from the set of equalities \eqref{aiai++}. 

\textbf{(iv)} If $s^N_0=0$ then the main matrix in \eqref{NN} is singular because some of $z_0^{(1)},\, z_0^{(2)},\,\ldots,\,z_0^{(N)}$ are zeros (c.f. cases {\rm (ii)} and {\rm (iii)} in Theorem \ref{t3}). These probabilities cannot all equal to zero as this would violate the net profit condition $\mathbb{E}S_N<N$. If some of $z_0^{(1)},\, z_0^{(2)},\,\ldots,\,z_0^{(N)}$ vanish, then we reconstruct the system \eqref{NN} and observe that the system \eqref{aiai++++} provides additional relations between probabilities  $m^{(k)}_{n-1}$ and $z^{(k)}_{n-1}$ for  $k\in \{1,\,2,\,\ldots,\,N\}$ and  $n\in\mathbb{N}$.

\begin{remark}
\textit{In particular, for $n=1$, \eqref{aiai++++} provides the missing information on some of those $m_0^{(1)},\,m_0^{(2)},\,\ldots,\,m_0^{(N)}$ which are not be given by this reconstructed system \eqref{NN} if some $z_0^{(1)},\,z_0^{(2)},\,\ldots,\,z_0^{(N)}$ equal zero. Indeed, for $n=1$, from
 \eqref{aiai++++} we have
 \begin{tiny}%
\begin{align}\label{NN1}
\begin{pmatrix}
z^{(N)}_0&0&\ldots&0\\
0&z^{(1)}_0&\ldots&0\\
\vdots&\vdots&\ddots&\vdots\\
0&0&\ldots&z_0^{(N-1)}
\end{pmatrix}
\begin{pmatrix}
m^{(1)}_1\\
m^{(2)}_1\\
\vdots\\
m^{(N)}_1
\end{pmatrix}
=
\begin{pmatrix}
m^{(N)}_0-\big(z^{(N)}_0+z^{(N)}_1\big)m_0^{(1)}\\
m^{(1)}_0-\big(z^{(1)}_0+z^{(1)}_1\big)m^{(2)}_0\\
\vdots\\
m^{(N-1)}_0-\big(z_0^{(N-1)}+z_1^{(N-1)}\big)m^{(N)}_0
\end{pmatrix}
.
\end{align}
\end{tiny}%
For some probabilities out of $z_0^{(1)},\, z_0^{(2)},\,\ldots,\,z_0^{(N)}$ being equal to zero, the corresponding terms on the left-hand side of the system \eqref{NN1} are equal to zero too, and the corresponding probabilities $m_0^{(1)},\,m_0^{(2)},\,\ldots,\,m_0^{(N)}$ can be solved.}
 \end{remark}

\textbf{(v)} Under the condition that the system of linear equations obtained in the previous steps (ii) and (iii) has a solution $m_0^{(1)},\,m_0^{(2)},\,\ldots,\,m_0^{(N)}$, the further probabilities $m_n^{(1)},\,m_n^{(2)},\,\ldots,\,m_n^{(N)}$, $n=1,\,2,\,\ldots,\,N$ that are needed for the formula \eqref{cai}, are calculated from the system \eqref{aiai++++}.

\textit{\textbf{Step D}}. By the main recursive formula \eqref{2+} we obtain $\varphi(u)$ for $u=0$ and for all $u\geqslant N+1$.

\section{Numerical examples}\label{nuex}

In this section, we present four survival probability calculation examples. All calculations were performed according to the algorithms from section \ref{alg} and  were carried out by program Wolfram Mathematica \cite{Mathematica}. We recall  that a random variable  $X$ is distributed according to the  Poisson law  with parameter $\lambda>0$, and denote this by $X\sim\mathcal{P}(\lambda)$, if $$\mathbb{P}(X=k)=e^{-\lambda}\lambda^k/k!,\  k\in\mathbb{N}_0.$$

\begin{example}
\textit{Let the bi-risk model ($DTRM(2)$) be generated by random variables $Z_1\sim\mathcal{P}(0.3)$ and $Z_2\sim\mathcal{P}(1.4)$.
We find the initial value of the survival probability $\varphi(0)$.}
\end{example}

For the considered example, we have $\mathbb{E}S_2=\mathbb{E}\left(Z_1+Z_2\right)=1.7<2$. Therefore, according to the algorithm, described in Section \ref{alg}, we should construct generating function of sum $S_2=Z_1+Z_2$. In view of the generating function ${\rm e}^{\lambda\,(s-1)}$ of the Poisson law  $\mathcal{P}(\lambda)$, we have
$$
G_{S_2}(s)=G_{Z_1}(s)\,G_{Z_2}(s)={\rm e}^{1.7\,(s-1)}, \, s\in\mathbb{C}.
$$
Equation ${\rm e}^{1.7(s-1)}=s^2$ has a root $\alpha=-0.3244096519$. Therefore, due to the part (i) of Theorem \ref{t3}
$$
\varphi(0)=(2-\mathbb{E}S_2)\,\frac{\alpha}{\alpha-G_{Z_2}(\alpha)}=0.3\,\frac{\alpha}{\alpha-{\rm e}^{1.4(\alpha-1)}}=0.2023378868,
$$
which coincides with the initial value of $\varphi(0)$ obtained  in \cite{DS} by different method.

\begin{example}
\textit{Let the DTRM(3) be generated by random variables  $Z_1\sim\mathcal{P}(1/2)$, $Z_2\sim\mathcal{P}(2/3)$ and $Z_3\sim\mathcal{P}(4/5)$. We find $\varphi(1)$ and $\varphi(2)$.}
\end{example}

Since $\mathbb{E}S_3=59/30<3$, we should solve the equation
$$
G_{S_3}(s)={\rm e}^{(s-1)59/30}=s^3, \, |s|<1,
$$
first. This equation
has the following two complex conjugate solutions
\begin{align*}
\al_1: = -0.287678 - 0.319495 {\rm i}\ \text{ and }\  \al_2= -0.287678 + 0.319495 {\rm i},
\end{align*}
Setting these values into the system \eqref{NN} for $N=3$  we obtain $\big(m_0^{(1)},\,m_0^{(2)},\,m_0^{(3)}\big)=(0.699796,\, 0.644968,\, 0.638276)$.
Cosequently, $m_0^{(1)}=\varphi(1)=0.699796$ and, according to the system \eqref{NN1}, $m_0^{(1)}+m_1^{(1)}=\varphi(2)= 0.860672$, which coincide with the corresponding values  of $\varphi(1)$ and $\varphi(2)$ given in \cite{GKS}.

\begin{example}
\textit{Let the DTRM(3) be generated by random variables  $Z_1$, $Z_2$ and $Z_3$, where
$$\mathbb{P}(Z_1=0)=0.8=1-\mathbb{P}(Z_1=1),$$ $$\mathbb{P}(Z_2=0)=0.2=1-\mathbb{P}(Z_2=1),\ \,   Z_3\mathop{=}\limits^{d} Z_1.$$
We find the survival probability $\varphi(u)$ for all $u\in\mathbb{N}_0$.}
\end{example}

Since $\mathbb{E}S_3=1.2<3$ again, we solve the equation
$
G_{S_3}(s)=s^3
$,
which is
 $$
 (0.8+0.2s)^2(0.2+0.8s)=s^3,\, |s|<1,
 $$
and has a root $s=-4/11$ of multiplicity two. According to the guidelines given in Step 3 (iii) in Section \ref{alg}, we should set up the modified system \eqref{NN} to obtain $m_0^{(1)}$. Such system is 
$$
\begin{pmatrix}
0.8&-1.6&0.8\\
0&-4.84&4.84\\
0.8&0.8&0.2
\end{pmatrix}
\begin{pmatrix}
m_0^{(1)}\\
m_0^{(2)}\\
m_0^{(3)}
\end{pmatrix}
=
\begin{pmatrix}
0\\
0\\
1.8
\end{pmatrix}.
$$
It implies $\varphi(1)=m_0^{(1)}=1$, and, consequently, $\varphi(u)=1$ for $u\in\{2,\,3,\,\ldots\}$. Finally, from the main recursive relation \eqref{2+}  we  conclude that $\varphi(0)=0.8$.

\begin{example}
\textit{Let us consider the DTRM(10) generated by ten Poissonian random variables $Z_k\sim\mathcal{P}(k/(k+1))$, $k\in\{1,\,2,\,\ldots,\,10\}$. We calculate $\varphi(u)$ for $u\in\{0,\,1,\,\ldots,\,15\}$ and also include the finite time survival probability
$$
\varphi(u,T):=\mathbb{P}\left(\bigcap_{n=1}^{T}\left\{u+n-\sum_{k=1}^{n}Z_k>0\right\}\right),
$$
when $u\in\{0,\,1,\,\ldots,\,15\}$ and $T\in\{1,\,2\,\ldots,\,15\}$.}
\end{example}

Arguing the same as deriving the main recursive relation for survival probability \eqref{2+}, for the $N$-seasonal model we have that
\begin{align*}
&\varphi(u,1)=\sum_{k\leqslant u}z^{(1)}_{k},\quad
\varphi(u,2)=\hspace{-2mm}\sum_{\substack {k_1\leqslant u\\k_2\leqslant u+1-k_1}}\hspace{-2mm}z^{(1)}_{k_1}z^{(2)}_{k_2},\quad
\ldots,\\
&\varphi(u,N)=\hspace{-6mm}\sum_{\substack{k_1\leqslant u\\k_2\leqslant u+1-k_1\\ \ldots\\k_N\leqslant u+N-1-k_1-\ldots-k_{N-1}}} \hspace{-8mm}z^{(1)}_{k_1}z^{(2)}_{k_2}\ldots z^{(N)}_{k_N},
\end{align*}
and
\begin{align*}
\varphi(u,T)=\hspace{-6mm}\sum_{\substack{k_1\leqslant u\\k_2\leqslant u+1-k_1\\ \ldots\\k_N\leqslant u+N-1-k_1-\ldots-k_{N-1}}}\hspace{-8mm}z^{(1)}_{k_1}z^{(2)}_{k_2}\cdots z^{(N)}_{k_N}\varphi\big(u+N-k_1-\ldots-k_N,\,T-N\big)
\end{align*}
if $T\in\{N+1,\, N+2,\,\ldots\}$. On the other hand, the finite time ruin probability, even more efficiently in terms of computational time, can be calculated by using an algorithm present in \cite{BBS}.

 Using the obtained formulas of $\varphi(u,T)$ and algorithm given in Section \ref{alg}, we calculate  the required values of $\varphi(u)$ and $\varphi(u,T)$.
 In Table \ref{tab} below we list the finite and ultimate time survival probabilities of the $10$-seasonal model. Numbers are rounded up to three decimal places except when the rounding result is $1$.

\begin{table}[H]
\centering
\caption{Survival probabilities of $10$-seasonal model.}\label{tab}
\vspace{2mm}
\begin{tabular}{|c|c|c|c|c|c|c|c|c|}
\hline
{$T$}&{$u=0$}&{$u=1$}&{$u=2$}&{$u=3$}&{$u=4$}&{$u=5$}&{$u=10$}&{$u=15$}\\
\hline
$1$&$0.607$&$0.910$&$0.986$&$0.998$&$1$&$1$&$1$&$1$\\
$2$&$0.519$&$0.848$&$0.963$&$0.992$&$0.999$&$1$&$1$&$1$\\
$3$&$0.470$&$0.801$&$0.938$&$0.983$&$0.996$&$0.999$&$1$&$1$\\
$4$&$0.437$&$0.763$&$0.914$&$0.972$&$0.991$&$0.998$&$1$&$1$\\
$5$&$0.412$&$0.732$&$0.891$&$0.959$&$0.986$&$0.995$&$1$&$1$\\
$10$&$0.339$&$0.626$&$0.798$&$0.894$&$0.947$&$0.975$&$1$&$1$\\
$15$&$0.319$&$0.595$&$0.766$&$0.868$&$0.928$&$0.962$&$0.999$&$1$\\
\hline
$\infty$&$0.284$&$0.535$&$0.698$&$0.803$&$0.871$&$0.916$&$0.990$&$0.999$\\
\hline
\end{tabular}
\end{table}

The provided Table \ref{tab} suggests that a "comfortable" long term survival conditions are met when initial surplus $u\geqslant 15$ under the case of consideration.

\end{document}